%
%
%
%
%
\documentclass{amsart}
\usepackage{latexsym,amsxtra,amscd,ifthen}
\usepackage{amsfonts}
\usepackage{verbatim}
\usepackage{amsmath}
\usepackage{amsthm}
\usepackage{amssymb}
\usepackage{url}

\theoremstyle{plain}
\newtheorem{maintheorem}{Theorem}


\numberwithin{equation}{section}

\newtheorem{theorem}[equation]{Theorem}
\newtheorem{lemma}[equation]{Lemma}
\newtheorem{proposition}[equation]{Proposition}
\newtheorem{corollary}[equation]{Corollary}

\theoremstyle{definition}

\newtheorem{example}[equation]{Example}
\newtheorem{remark}[equation]{Remark}
\newtheorem{question}[equation]{Question}

\newcommand{\cwlt}{(\textup{cwlt})}
\newcommand{\Q}{\mathbb{Q}}
\newcommand{\Z}{\mathbb{Z}}

\DeclareMathOperator{\ch}{char}

\DeclareMathOperator{\Aut}{Aut}
\DeclareMathOperator{\gr}{gr}
\DeclareMathOperator{\hdet}{hdet}
\DeclareMathOperator{\GKdim}{GKdim}
\DeclareMathOperator{\im}{im}
\DeclareMathOperator{\af}{af}
\newcommand{\VA}{V_{n}({\mathcal A})}
\newcommand{\VAM}{V_{n}({\mathcal A}')}

\begin{document}

\title[Invariant theory for quantum Weyl algebras]
{Invariant theory for quantum Weyl
algebras under finite group action}

\author{S. Ceken, J. H. Palmieri, Y.-H. Wang and J. J. Zhang}

\address{Ceken: Department of Mathematics, Akdeniz University, 07058 Antalya,
Turkey}

\email{cekensecil@gmail.com}

\address{Palmieri: Department of Mathematics, Box 354350,
University of Washington, Seattle, Washington 98195, USA}

\email{palmieri@math.washington.edu}

\address{Wang: School of Mathematics,
Shanghai University of Finance and
Economics, Shanghai 200433, China}

\email{yhw@mail.shufe.edu.cn}

\address{Zhang: Department of Mathematics, Box 354350,
University of Washington, Seattle, Washington 98195, USA}

\email{zhang@math.washington.edu}

\begin{abstract}
We study the invariant theory of a class of quantum Weyl algebras
under group actions and prove that the fixed subrings are always
Gorenstein. We also verify the Tits alternative for
the automorphism groups of these quantum Weyl algebras.
\end{abstract}

\subjclass[2000]{Primary 16W20, 16E65}


\keywords{automorphism group, quantum Weyl algebra, Artin-Schelter
Gorenstein property, free subgroup}


\maketitle


\section*{Introduction}
\label{yysec0}

Fix a field $k$. For $n\geq 2$, let $W_n$ be the
\emph{$(-1)$-quantum Weyl algebra}: this is the $k$-algebra generated by
$x_1,\dots, x_n$ subject to the relations $x_i x_j+x_j x_i=1$ for all
$i\neq j$.

\begin{maintheorem}
\label{yythm1}
Assume that $\ch k=0$. Let $n$ be an even integer 
$\geq 4$ and let $G$ be a group acting on $W_n$. Then the fixed subring 
$W_n^G$ under the $G$-action is filtered Artin-Schelter Gorenstein.
\end{maintheorem}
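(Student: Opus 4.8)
The plan is to pass to the associated graded ring and reduce the problem to the invariant theory of a quantum affine space. Filter $W_n$ by total degree in the generators $x_1,\dots,x_n$. Since the top-degree part of each relation $x_ix_j+x_jx_i=1$ is $x_ix_j+x_jx_i$, the associated graded ring is $\gr W_n\cong k_{-1}[x_1,\dots,x_n]$, the $(-1)$-skew polynomial ring, which is a Noetherian domain and is Artin--Schelter regular of global dimension $n$; in particular it is connected graded AS Gorenstein. Because $W_n^G$ is filtered AS Gorenstein once its associated graded ring is AS Gorenstein, it suffices to establish the two statements (i) $\gr(W_n^G)=(\gr W_n)^G$ and (ii) $(\gr W_n)^G$ is AS Gorenstein. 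Statement (ii) I would deduce from the noncommutative Watanabe theorem of J\o rgensen--Zhang: the invariant subring of a Noetherian AS regular algebra under a finite group of graded automorphisms with trivial homological determinant is AS Gorenstein.

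First I would reduce to a \emph{linear} finite action, and this structural reduction is the step I expect to be the main obstacle. The key input is the determination of the finite subgroups of $\Aut(W_n)$: every finite group acting on $W_n$ is conjugate to a subgroup of the group of linear automorphisms. Here the hypothesis $n\geq 3$ (in particular $n\geq 4$) is essential. Indeed, a filtered automorphism sends each $x_i$ to an element of filtration degree $\leq 1$, hence to $\sum_j a_{ij}x_j+c_i$; substituting into the relations and comparing the $x_k^2$-terms and the constant terms forces $(a_{ij})$ to be monomial, $x_i\mapsto\lambda_i x_{\sigma(i)}+c_i$, with $c_i=0$ and $\lambda_i\lambda_j=1$ for all $i\neq j$. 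For $n\geq 3$ this collapses the scalings to $\lambda_1=\dots=\lambda_n=\pm 1$, so the linear automorphism group is exactly $S_n\times\langle -1\rangle$, where $S_n$ permutes the $x_i$ and $-1$ is the global sign $x_i\mapsto -x_i$. (For $n=2$ an extra one-parameter torus $x_1\mapsto tx_1,\ x_2\mapsto t^{-1}x_2$ survives, which is one reason that case is excluded.) After replacing $G$ by a conjugate we may assume $G\subseteq S_n\times\langle -1\rangle$ acts by graded automorphisms of $\gr W_n$; since $\ch k=0$ and $G$ is finite, the Reynolds operator $\tfrac{1}{|G|}\sum_{g}g$ is a filtered projection onto the invariants, which yields (i).

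The heart of the argument is then the homological determinant computation, where the parity of $n$ enters. Write $A=\gr W_n=k_{-1}[x_1,\dots,x_n]$. A direct computation of the quadratic dual shows that $A^{!}$ is the \emph{commutative} algebra $k[\xi_1,\dots,\xi_n]/(\xi_1^2,\dots,\xi_n^2)$, whose top-degree component is the one-dimensional space spanned by $\xi_1\xi_2\cdots\xi_n$. Since $\hdet_A(g)$ is (up to an inversion, immaterial here) the scalar by which $g$ acts on this top component, and since the $\xi_i$ \emph{commute}, any permutation $\sigma\in S_n$ fixes $\xi_1\cdots\xi_n$ and hence has $\hdet_A(\sigma)=1$; the global sign acts by $(-1)^n$, which equals $1$ precisely because $n$ is even. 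Therefore $\hdet_A(g)=1$ for every $g\in S_n\times\langle -1\rangle$, in particular for every $g\in G$. This is exactly the point that would fail for $n$ odd, since the global sign then has $\hdet_A=-1$.

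Finally, combining $\hdet_A\equiv 1$ with the noncommutative Watanabe theorem gives (ii): $A^G=(\gr W_n)^G$ is AS Gorenstein. Together with (i) this shows $\gr(W_n^G)$ is AS Gorenstein, i.e. $W_n^G$ is filtered AS Gorenstein, as claimed. The genuinely hard part is the structural reduction of the second paragraph---controlling all finite subgroups of the (infinite) automorphism group $\Aut(W_n)$ up to conjugacy---while the filtered-to-graded transfer and the homological determinant computation are comparatively formal once that classification is in hand.
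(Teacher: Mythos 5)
Your outline matches the paper's strategy in its endgame---trivial homological determinant plus a noncommutative Watanabe-type theorem---but the step you yourself flag as ``the main obstacle'' is a genuine gap, and it is exactly where the paper's proof has its real content. You assert that every finite group acting on $W_n$ is conjugate to a subgroup of the linear automorphisms, and you justify this only by analyzing \emph{filtered} automorphisms (those sending each $x_i$ into filtration degree $\leq 1$). That computation determines $\Aut_{\af}(W_n)=S_n\times\{\pm 1\}$ (it is Lemma \ref{yylem1.1} here, i.e.\ \cite[Lemma 4.3]{CPWZ1}), but it says nothing about automorphisms that fail to preserve the filtration, and no mechanism is offered for conjugating a finite group of such automorphisms into the linear part. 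This cannot be waved away by a soft argument, because for odd $n$ the algebra $W_n$ really does admit non-affine automorphisms (Theorem \ref{yythm2} of this paper produces free subgroups of $\Aut(W_n)$ from them), and the Gorenstein question for $W_n^G$ with $n$ odd is open precisely because no linearization result is known there. The paper instead invokes \cite[Theorem 1]{CPWZ1}: for $n$ even, $\Aut(W_n)=S_n\times\{\pm 1\}$ \emph{exactly}---every automorphism is already linear, with no conjugation needed---and that theorem is proved by the discriminant method (the discriminant of $W_n$ over its center is a dominating polynomial, forcing all automorphisms to be affine). Your proposal neither proves this nor cites anything that would. A secondary point folded into the same gap: the theorem allows an arbitrary group $G$, and the paper (Remark \ref{yyrem1.5}(1)) replaces $G$ by its image in $\Aut(W_n)$, which is finite of order $2\,n!$ again by \cite[Theorem 1]{CPWZ1}; your Reynolds-operator step and your appeal to a Watanabe theorem silently assume $G$ finite, which you cannot justify without that classification.

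Once $\Aut(W_n)=S_n\times\{\pm 1\}$ is granted, the remainder of your argument is sound and essentially coincides with the paper's: the paper's Theorem \ref{yythm1.3} proves $\hdet g=1$ for all $g\in G_1({\mathcal A})$ by splitting $g$ into a permutation part (with $\hdet=1$ by the proof of \cite[Theorem 1.5]{KKZ1}) and a diagonal part $g(Id,\{r_i\})$ with $\hdet=\prod_i r_i=1$, then applies \cite[Theorem 3.5]{JZ}, which already packages your steps (i) and (ii) for filtered algebras. Your Koszul-dual computation---that the quadratic dual of $k_{-1}[x_1,\dots,x_n]$ is the commutative algebra $k[\xi_1,\dots,\xi_n]/(\xi_i^2)$, so permutations fix $\xi_1\cdots\xi_n$ and the sign automorphism acts by $(-1)^n$---is a correct and rather clean alternative verification of the same determinant values, and it does identify correctly where the evenness of $n$ enters. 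But as written the proof is incomplete: the reduction to linear actions is assumed rather than established, and it is the one step that cannot be recovered by elementary means.
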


The above theorem was announced in \cite[Theorem 2]{CPWZ1} without
proof. The first purpose of this paper is to provide a proof of
this, using some earlier results from noncommutative invariant theory
\cite{JZ, KKZ1}. Secondly, we discuss the automorphism group of $W_n$ 
when $n\geq 3$ is odd [Theorem~\ref{yythm2}].  We also want to correct 
some small errors in \cite{CPWZ1}: see Remarks \ref{yyrem1.5}(2) and 
\ref{yyrem2.7}. Finally, we give a criterion for an isomorphism question 
for a class of $(-1)$-quantum Weyl algebras [Theorem \ref{yythm3}].

One aspect of invariant theory is to study homological properties 
of fixed subrings (also called invariant subrings) under group 
actions. When $A$ is regular (or has finite global dimension), 
the fixed subring $A^G$ under the action of a finite group $G$ is 
Cohen-Macaulay when $\ch k$ does not divide the order of $G$. 
One interesting question is \emph{when a fixed subring $A^G$ 
is Gorenstein}. The famous Watanabe theorem \cite{Wa} answers such a question 
for commutative polynomial rings. Theorem \ref{yythm1} above provides 
a solution for $W_n$ when $n$ is even. However, this question is open 
for $W_n^G$ when $n$ is odd (even when $n=3$ and $G$ is finite). Another 
interesting question is \emph{when a fixed subring $A^G$ is regular}. 
The classical result of Shephard-Todd-Chevalley \cite{ST, Ch} answers 
this question for commutative polynomial rings. The question is open 
for $W_n$ for all $n\geq 3$, though we have an easy partial result: see 
Proposition \ref{yypro1.8}. A recent survey on invariant theory of 
Artin-Schelter regular algebras is given by Kirkman in \cite{Ki}. 

Another aspect of invariant theory is to study the structure of the 
automorphism group $\Aut (A)$ of an algebra $A$. There is a long history 
and an extensive study of the automorphism groups of algebras. Determining 
the full automorphism group of an algebra is generally a notoriously 
difficult problem. Recently, significant progress has been made in 
finding the full automorphism groups of some noncommutative algebras. For 
example, during the last two years, Yakimov has proved the 
Andruskiewitsch-Dumas conjecture and the Launois-Lenagan conjecture 
by using a rigidity theorem for quantum tori \cite{Y1, Y2}. The 
automorphism groups of generalized or quantum Weyl algebras have been 
studied by several researchers \cite{AD, BJ, SAV}. The authors used 
the discriminant method to determine the automorphism groups of some 
noncommutative algebras \cite{CPWZ1, CPWZ2}. When $n$ is even, 
$\Aut(W_n)$ was worked out by the authors in \cite[Theorem 1]{CPWZ1}. 
Unfortunately, we have not been able to determine the full automorphism group 
of $W_n$ when $n$ is odd \cite[Example 5.10 and Question 5.13]{CPWZ1}. 

The second theorem concerns $\Aut(W_n)$ when $n$ is odd. 
We will consider a slightly more general setting. Let ${\mathcal A}:=
\{a_{ij}\in k \mid 1\leq i<j\leq n\}$ be a set of scalars. 
Define a modified $(-1)$-quantum Weyl algebra $\VA$ 
\cite[Section 4]{CPWZ1} to be the $k$-algebra generated by 
$\{x_1,\dots,x_n\}$ subject to the relations
\[
x_ix_j+x_jx_i=a_{ij}, \quad \forall \; 1\leq i<j\leq n.
\]
As a special case, note that if $a_{ij}=1$ for all $i<j$, then $\VA=W_n$.
Another special case is when $a_{ij}=0$ for all $i<j$, in which case 
$\VA$ is just the skew polynomial ring 
$k_{-1}[x_1,\dots,x_n]$.

\begin{maintheorem}
\label{yythm2}
Let $n\geq 3$ be an odd integer. Suppose $\ch k$ does not 
divide $(n-1)!$. Then $\Aut(\VA)$ contains a free group on two 
generators. As a consequence, it contains a free group on countably 
many generators.
\end{maintheorem}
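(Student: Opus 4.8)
The plan is to exhibit two automorphisms $\sigma,\tau\in\Aut(\VA)$ of infinite order that generate a free group of rank two; the final assertion is then immediate, since a free group of rank two already contains free groups of every countable rank (for instance $\{\sigma^{m}\tau\sigma^{-m}\}_{m\geq1}$ freely generates a subgroup of infinite rank). So the whole problem reduces to producing $\sigma,\tau$ and certifying that no nontrivial reduced word in them is the identity automorphism.

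To find enough automorphisms I would exploit the feature that distinguishes odd $n$: each $x_i^2$ is central in $\VA$, and because $n$ is odd the top monomial $x_1x_2\cdots x_n$ is central in the associated graded ring $k_{-1}[x_1,\dots,x_n]$ of the standard filtration (the sign incurred in moving one variable past the other $n-1$ is $(-1)^{n-1}=+1$). Using this central direction I would build a locally nilpotent derivation $\delta$ of $\VA$ that is nilpotent of index $n$ on the generators, so that $\delta^{n}(x_i)=0$ while $\delta^{n-1}\neq0$. Its truncated exponential, defined on generators by $\exp(c\delta)(x_i)=\sum_{j=0}^{n-1}\frac{c^{j}}{j!}\,\delta^{j}(x_i)$ for $c\in k$, then extends to an algebra automorphism exactly when the denominators $2!,3!,\dots,(n-1)!$ are invertible in $k$, which is precisely the hypothesis $\ch k\nmid(n-1)!$ (equivalently $\ch k=0$ or $\ch k>n-1$); this is where the factorial condition enters, and one checks $\exp(-c\delta)$ is the inverse. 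Carrying this out twice, with two transverse derivations $\delta,\delta'$, yields two one-parameter unipotent families $\{\exp(c\delta)\}$ and $\{\exp(c\delta')\}$ of automorphisms.

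For the freeness I would arrange $\delta,\delta'$ so that the pair behaves like the upper- and lower-triangular unipotents of $SL_2$: the model is the classical fact that $\left(\begin{smallmatrix}1&2\\0&1\end{smallmatrix}\right)$ and $\left(\begin{smallmatrix}1&0\\2&1\end{smallmatrix}\right)$ generate a free subgroup of $SL_2(\mathbb{Z})$. Setting $\sigma=\exp(2\delta)$ and $\tau=\exp(2\delta')$ (legitimate since $\ch k\neq2$), the task is to show the assignment mimicking this $SL_2$-action is faithful on $\langle\sigma,\tau\rangle$. The robust way to do this, avoiding any unproven claim that $\Aut(\VA)$ is linear, is a direct application of the Ping-Pong Lemma: I would identify two disjoint ``attracting'' subsets $X_\sigma,X_\tau$ inside a convenient $\VA$-module, or inside the filtered and graded structure of $\VA$ itself via leading-term or central-eigenvalue data, such that every nonzero power of $\sigma$ maps $X_\tau$ into $X_\sigma$ and vice versa, and then conclude freeness from the standard criterion. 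The routine steps (that $\delta,\delta'$ respect each relation $x_ix_j+x_jx_i=a_{ij}$ and have the stated nilpotency index) are straightforward bookkeeping; the genuine obstacle is the freeness itself, namely choosing the derivations and the ping-pong sets so that the induced dynamics really is free and producing explicit invariant sets on which the Ping-Pong Lemma can be verified.
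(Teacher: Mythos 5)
Your proposal is a plan rather than a proof: both load-bearing steps are left unconstructed, and the one concrete mechanism you do commit to is not the right one. First, you never exhibit the derivations $\delta,\delta'$. Producing any non-affine automorphism of $\VA$ is the heart of the matter (for $n$ even there are none, by Lemma \ref{yylem1.1}(3)), and the verification you dismiss as ``straightforward bookkeeping'' --- that a derivation respects every relation $x_ix_j+x_jx_i=a_{ij}$ --- is exactly where the difficulty and the odd-$n$ hypothesis live. The automorphisms the paper uses come from the element $\Omega(x_2,\dots,x_n)=\sum_{\sigma}(-1)^{|\sigma|}x_{\sigma(2)}\cdots x_{\sigma(n)}$: the map $\delta(x_1)=\Omega(x_2,\dots,x_n)$, $\delta(x_i)=0$ for $i\geq 2$, is a derivation precisely because $x_j\Omega=-\Omega x_j$ for $j\geq 2$ when $n$ is odd (Lemma \ref{yylem2.6}), and it satisfies $\delta^2=0$, so $g=1+\delta$ and the one-parameter family $g^c=1+c\delta$ require no denominators at all. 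Consequently your diagnosis of where $\ch k\nmid(n-1)!$ enters --- invertibility of $2!,\dots,(n-1)!$ in a truncated exponential of nilpotency index $n$ --- does not match any construction you give, and no derivation with $\delta^n=0$, $\delta^{n-1}\neq 0$ on generators is produced or known to exist here. In the actual proof the factorial appears instead as the \emph{leading coefficient}: writing $g(x_1)=x_1+a_0+a_1x_2$, the element $a_1$ has leading term $(-1)^{n-2}(n-1)!\prod_{i=3}^n x_i$, and the hypothesis guarantees this coefficient is nonzero, so that $a_1b_1$ is transcendental over $k$.

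Second, the freeness argument is deferred entirely: you invoke the ping-pong lemma but specify neither the space nor the sets $X_\sigma,X_\tau$, and you concede this is ``the genuine obstacle.'' Worse, your concrete plan --- set $\sigma=\exp(2\delta)$, $\tau=\exp(2\delta')$ in analogy with $SL_2(\Z)$ --- fails outright in positive characteristic, which the hypothesis permits (e.g.\ $n=3$, $\ch k=5$): there $\sigma^p=\exp(2p\delta)=1$, so the proposed generators have finite order $p$ and cannot themselves generate a free group; your framing via ``two automorphisms of infinite order'' cannot be realized from unipotent one-parameter families. The paper avoids ping-pong altogether: a normal-form computation (Lemma \ref{yylem2.3}) shows that a reduced word $\tau_z=g^{n_z}h^{m_{z-1}}\cdots h^{m_1}g^{n_1}$ sends $x_1$ to $d_0+d_1x_1+d_2x_2+\bigl(\prod_s m_s\bigr)\bigl(\prod_s n_s\bigr)a_1(b_1a_1)^{z-1}x_2$, and transcendence of $a_1b_1$ together with freeness of the left $R$-module $R+Rx_1+Rx_2$ forces $\tau_z(x_1)\neq x_1$; this embeds the free product $(k,+)*(k,+)$ into $\Aut(\VA)$ (Proposition \ref{yypro2.5}), after which a free subgroup of rank two is extracted via Lemma \ref{yylem2.4} and \cite{AP} --- a step that is genuinely needed in characteristic $p$, where $(k,+)$ has no elements of infinite order, and which your sketch omits. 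Your closing observation about conjugates generating an infinite-rank free subgroup is correct, but everything upstream of it remains to be supplied.
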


Note that \cite[Theorem 1]{CPWZ1} implies that, when $n$ is even,
$\Aut(W_n)$ is finite (and so virtually solvable). A more general 
statement for $\Aut(\VA)$ is Lemma 1.1. Combining Theorem \ref{yythm2} 
with Lemma 1.1(2,3), we obtain that in characteristic 0, $\Aut(\VA)$ 
either is virtually solvable or contains a free subgroup of rank two. 
This can be viewed as a version of the Tits alternative \cite{Ti} for 
the automorphism groups of $\VA$. The original Tits alternative states 
that every finitely generated linear group is either virtually solvable 
or contains a free subgroup of rank two. We also have a version of the 
Tits alternative for the class of the automorphism groups of skew 
polynomial rings in a separate paper \cite{CPWZ3}.

In Section \ref{yysec3}, we use the discriminant to prove the following 
criterion for when two $\VA$s are isomorphic (in the case when $n$ 
is even). For simplicity, let $a_{ij}=a_{ji}$ if $i>j$.

\begin{maintheorem}
\label{yythm3}
Suppose $\ch k\neq 2$ and let $n$ be an even integer. Let ${\mathcal A}':=
\{a'_{ij}\mid 1\leq i<j\leq n\}$ be another set of scalars in $k$. 
Then $\VA \cong \VAM$ if and 
only if there are a permutation $\sigma\in S_n$ and nonzero scalars
$\lambda_i$ for $i=1,\dots,n$ such that $a'_{ij}= \lambda_i \lambda_j
a_{\sigma(i)\sigma(j)}$ for all $i$ and $j$.
\end{maintheorem}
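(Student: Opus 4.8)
The plan is to prove the easy ``if'' direction by exhibiting the isomorphism explicitly, and to prove the ``only if'' direction by a discriminant argument that forces any isomorphism to be a scaled permutation of the generators. For the ``if'' direction I would define $\psi\colon \VAM\to \VA$ by sending the $i$th generator of $\VAM$ to $\lambda_i x_{\sigma(i)}$, where $x_1,\dots,x_n$ denote the generators of $\VA$. A direct computation gives $\psi(x_i)\psi(x_j)+\psi(x_j)\psi(x_i)=\lambda_i\lambda_j(x_{\sigma(i)}x_{\sigma(j)}+x_{\sigma(j)}x_{\sigma(i)})=\lambda_i\lambda_j a_{\sigma(i)\sigma(j)}$, which equals $a'_{ij}$ by hypothesis; hence $\psi$ respects the defining relations and extends to an algebra homomorphism. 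Since $\sigma$ is a bijection and the $\lambda_i$ are nonzero, the assignment $x_{\sigma(i)}\mapsto \lambda_i^{-1}x_i$ gives a two-sided inverse, so $\psi$ is an isomorphism.

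For the ``only if'' direction the first ingredients are the center and the discriminant. I would first check that, for $n$ even, each $x_i^2$ is central and that the center of $\VA$ is exactly the polynomial ring $Z=k[x_1^2,\dots,x_n^2]$, over which $\VA$ is free of rank $2^n$ with basis the square-free monomials; the parity computation needed here genuinely uses that $n$ is even. Next I would compute the discriminant $d$ of $\VA$ over $Z$ and show that its top-degree part is, up to a nonzero scalar (here $\ch k\neq 2$ guarantees nonvanishing), the monomial $\prod_{i=1}^n (x_i^2)^{2^{n-1}}$. The key structural input, which I would import from the discriminant machinery of \cite{CPWZ1, CPWZ2}, is that this discriminant is \emph{dominating}, so that any isomorphism $\phi\colon\VA\to\VAM$ is affine: $\phi(x_i)=\sum_j c_{ij}x_j+b_i$ with $(c_{ij})$ invertible and $b_i\in k$.

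Granting affineness, the leading linear part $\bar\phi(x_i)=\sum_j c_{ij}x_j$ is a graded isomorphism of $k_{-1}[x_1,\dots,x_n]$, so the relations force $\sum_k c_{ik}c_{jk}=0$ for $i\neq j$; that is, $(c_{ij})(c_{ij})^{\mathrm{T}}$ is diagonal. This orthogonality alone does not pin down $\phi$, and here the discriminant does the remaining work: since $\phi$ carries $d$ to a scalar multiple of the discriminant of $\VAM$, comparing top-degree parts and using $\bar\phi(x_i^2)=\sum_j c_{ij}^2 x_j^2$ yields $\prod_i\bigl(\sum_j c_{ij}^2 x_j^2\bigr)^{2^{n-1}}=(\text{scalar})\prod_i (x_i^2)^{2^{n-1}}$ in $Z$. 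Because $Z$ is a unique factorization domain and each factor $\sum_j c_{ij}^2 x_j^2$ is a nonzero linear form in the variables $x_j^2$, matching irreducible factorizations forces each such factor to be a scalar times a single $x_{\sigma(i)}^2$ for a permutation $\sigma$; hence $(c_{ij})$ is a scaled permutation matrix and $\bar\phi(x_i)=\lambda_i x_{\sigma(i)}$. To eliminate the constants I would substitute $\phi(x_i)=\lambda_i x_{\sigma(i)}+b_i$ into $x_ix_j+x_jx_i=a_{ij}$ and collect the linear terms, obtaining $2\lambda_i b_j x_{\sigma(i)}+2\lambda_j b_i x_{\sigma(j)}=0$; since $1,x_1,\dots,x_n$ are linearly independent in $\VAM$ and $\ch k\neq 2$, all $b_i=0$. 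Reading off the constant term then yields the stated relation (after matching the convention for the direction of $\phi$).

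The step I expect to be the main obstacle is establishing that the discriminant is dominating and hence that every isomorphism is affine: this is the substantive use of the discriminant theory, and it must be set up so as to apply to isomorphisms between the two possibly different algebras $\VA$ and $\VAM$, not merely to automorphisms of a single algebra. The accompanying discriminant computation, identifying the top-degree part as $\prod_i (x_i^2)^{2^{n-1}}$, is routine given the explicit $Z$-basis but must be carried out carefully enough to license the unique-factorization step.
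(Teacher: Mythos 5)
Your proposal is correct, and its skeleton is the same as the paper's: the explicit map $x'_i\mapsto \lambda_i x_{\sigma(i)}$ for the ``if'' direction; a discriminant comparison forcing any isomorphism to have $\deg g(x'_i)=1$; the linear part then being a scaled permutation; and the constants eliminated via the defining relations. Two sub-steps differ, and your flagged ``main obstacle'' is handled more cheaply in the paper than you anticipate. First, no dominating-discriminant machinery is invoked: the paper's Lemma \ref{yylem3.1} already states $g(d(A/Z(A)))=c\,d(B/Z(B))$ for any isomorphism of PI algebras, and since the leading part of the discriminant is the \emph{single} monomial $\left(\prod_{i=1}^n x_i^2\right)^{2^{n-1}}$ of degree $n2^n$ (imported from \cite[Theorem 4.9]{CPWZ1}, not recomputed), an elementary degree count gives $\sum_i \deg g(x'_i)=n$ and hence $\deg g(x'_i)=1$ for all $i$. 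Second, to pass from affine to scaled permutation, the paper applies $\gr$ and cites \cite[Lemma 4.3]{CPWZ1}, whereas you reuse the discriminant and unique factorization in $Z=k[x_1^2,\dots,x_n^2]$; your route is valid, but your aside that ``orthogonality alone does not pin down $\phi$'' is mistaken: in $k_{-1}[x_1,\dots,x_n]$ the elements $x_k^2$ are linearly independent (PBW basis), so the relation $\bar\phi(x_i)\bar\phi(x_j)+\bar\phi(x_j)\bar\phi(x_i)=\sum_k 2c_{ik}c_{jk}x_k^2=0$ yields $c_{ik}c_{jk}=0$ for \emph{each} $k$ separately (using $\ch k\neq 2$), not merely $\sum_k c_{ik}c_{jk}=0$, and together with invertibility of $(c_{ij})$ this already forces a scaled permutation matrix --- which is essentially the content of the cited Lemma 4.3, rendering your second discriminant/UFD pass unnecessary (though correct). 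Your elimination of the constants $b_i$ by collecting linear terms is the same computation the paper delegates to the second half of the proof of \cite[Lemma 4.3]{CPWZ1}.
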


As a consequence of Theorem \ref{yythm3}, when $n\geq 4$ is even,
there are infinitely many non-isomorphic $\VA$s.

In Section 4, we give some examples of $\Aut(\VA)$ for $n=4,6$ and 
list some questions about $\VA$.

\section{Proof of Theorem \ref{yythm1}}
\label{xxsec1}
Throughout let $k$ be a base commutative domain. Modules, vector spaces,
algebras, and morphisms are over $k$. In this section we further assume
that $k$ is a field. 

As above, for each $n\geq 2$, let $\VA$ denote the 
algebra generated by $\{x_i\}_{i=1}^n$ subject to the relations 
$x_i x_j+x_j x_i=a_{ij}$ for all $i\neq j$. This is a filtered 
Artin-Schelter regular algebra in the following sense.
We refer to \cite[Definition 1.2]{KKZ2} for the definition of
(connected graded) Artin-Schelter regularity. 
Let $F:=\{F_n\subseteq A \mid n \geq 0\}$ be an increasing 
filtration on an algebra $A$ satisfying
\begin{enumerate}
\item[(a)]
$F_0=k$,
\item[(b)]
$F_{n}F_{m}\subseteq F_{n+m}$ for all $n,m\geq 0$,
\item[(c)]
$A=\bigcup_{n} F_n$.
\end{enumerate}
The associated graded algebra with respect to $F$, denoted by $\gr_F A$, 
is defined to be $\gr_F A=\bigoplus_{i=0}^{\infty} F_i/F_{i-1}$. This
is a connected graded algebra by condition (a). An algebra $A$ is called 
\emph{filtered Artin-Schelter regular} 
(resp.~\emph{filtered Artin-Schelter Gorenstein}) if there is a
filtration $F$ such that $\gr_F A$ is Artin-Schelter regular
(resp.~Artin-Schelter Gorenstein). In our case, we filter 
$\VA$ by setting $F_0=k$, $F_1=k+\sum_{s=1}^n k x_s$, and 
$F_i=(F_1)^i$ for all $i\geq 2$. Then it is easy to see that $\gr_F 
\VA \cong k_{-1}[x_1,\dots,x_n] (=V_n(\{0\}))$, the skew 
polynomial ring generated by $\{x_i\}_{i=1}^n$ subject to the relations 
$x_jx_i=-x_i x_j$ for all $i<j$. Since $k_{-1}[x_1,\dots,x_n]$
is Artin-Schelter regular, $\VA$ is filtered 
Artin-Schelter regular. The symmetric group $S_n$ acts on the set 
$\{x_i\}_{i=1}^n$ naturally. It is easy to see that this action extends 
to an $S_n$-action on the algebras $\VA$ and $k_{-1}[x_1,\dots,x_n]$ 
uniquely; note that the invariant theory of $S_n$-actions on 
$k_{-1}[x_1,\dots,x_n]$ was studied in \cite{KKZ1}. 

The group of all affine automorphisms of $\VA$, denoted 
by $\Aut_{\af}(\VA)$, can be worked out by using easy 
combinatorics \cite[Lemma 4.3]{CPWZ1}. Let $G({\mathcal A})$ be the 
set of automorphisms of $\VA$ of the form
\begin{equation}
\label{E1.0.1}\tag{E1.0.1}
g: \quad x_i\to r_i x_{\sigma(i)} \; \forall \; i
\end{equation}
where $\sigma\in
S_n$ and $r_i\in k^\times = k \setminus \{0\}$.  It is easy to see
that $g$ is an automorphism of $\VA$ if and only if
\begin{equation}
\label{E1.0.2}\tag{E1.0.2}
a_{ij}=r_i r_j a_{\sigma(i)\sigma(j)}
\end{equation} 
for all $i\neq j$. Such an automorphism $g$ of $\VA$ is denoted by 
$g(\sigma,\{r_1,\dots,r_n\})$. The following lemma is easy.
We will use another obvious automorphism of $\VA$, which sends $x_i$ 
to $-x_i$ for all $i=1,\dots,n$. We denote this automorphism by $-1$,
while the identity is just denoted by $1$. 

\begin{lemma}
\label{yylem1.1} 
Suppose that $n\geq 2$.
\begin{enumerate}
\item[(1)]
\cite[Lemma 4.3]{CPWZ1}
$G({\mathcal A})$ is a group, and $\Aut_{\af}(\VA)=G({\mathcal A})$.
\item[(2)]
There is a short exact sequence
\[
1\to Z \to G({\mathcal A})\to S_n
\]
where $Z$ is a subgroup of $(k^\times)^n$. As a consequence, $G({\mathcal A})$
is virtually abelian {\rm{(}}so virtually solvable{\rm{)}}.
\item[(3)]
\cite[Theorem 4.9(3)]{CPWZ1}
If $n$ is even, then $\Aut(\VA)=
\Aut_{\af}(\VA)=G({\mathcal A})$.
\end{enumerate}
In parts {\rm{(}}4--5{\rm{)}} we further assume that $a_{ij}\neq 0$ for all $i<j$.
\begin{enumerate}
\item[(4)]
If $n\geq 3$, there is a short exact sequence
\[
1\to \{\pm 1\} \to G({\mathcal A})\to S_n.
\]
\item[(5)]
If $n\geq 4$ is even, then $\Aut(\VA)=G({\mathcal A})$ 
is finite. 
\end{enumerate}
\end{lemma}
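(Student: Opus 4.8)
Since parts (1) and (3) are quoted from \cite{CPWZ1}, I will take them as given and concentrate on (2), (4) and (5); part (5) then falls out by combining (3) and (4). The backbone of the whole argument is the assignment $\pi\colon G(\mathcal A)\to S_n$ that sends $g(\sigma,\{r_1,\dots,r_n\})$ to its permutation part $\sigma$. First I would verify that $\pi$ is a group homomorphism: composing $g=g(\sigma,\{r_i\})$ with $h=g(\tau,\{s_i\})$ sends $x_i\mapsto s_i\,r_{\tau(i)}\,x_{\sigma\tau(i)}$, so $gh=g(\sigma\tau,\{s_i r_{\tau(i)}\})$ and hence $\pi(gh)=\sigma\tau=\pi(g)\pi(h)$. (Closure of the family (E1.0.1) under composition and inversion is part of the group structure already asserted in (1).)

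For (2), I would compute the kernel $Z=\ker\pi$ explicitly. An element of $Z$ has $\sigma=\mathrm{id}$, so it is a diagonal map $x_i\mapsto r_i x_i$; by the automorphism criterion (E1.0.2) this is an automorphism precisely when $r_i r_j=1$ for every pair $i\neq j$ with $a_{ij}\neq 0$. The assignment $g\mapsto(r_1,\dots,r_n)$ then embeds $Z$ as a subgroup of the abelian group $(k^\times)^n$, which is exactly the short exact sequence $1\to Z\to G(\mathcal A)\to S_n$ (note that no surjectivity onto $S_n$ is claimed). Since $G(\mathcal A)/Z$ injects into the finite group $S_n$, the abelian subgroup $Z$ has index dividing $n!$; thus $G(\mathcal A)$ is virtually abelian, and in particular virtually solvable.

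For (4), I would specialize this kernel computation to the hypothesis $a_{ij}\neq 0$ for all $i<j$, so that the defining relations read $r_i r_j=1$ for \emph{all} $i\neq j$. Assuming $n\geq 3$, choose three distinct indices $i,j,k$; from $r_i r_j=r_i r_k=1$ I get $r_j=r_k$, and then $r_j r_k=1$ forces $r_j^2=1$, i.e.\ $r_j=\pm 1$. Applying this to every triple shows that each $r_i\in\{\pm 1\}$ and that all the $r_i$ carry a common sign (since $r_i r_j=1$). Hence $Z$ consists of exactly the two automorphisms $1$ and $-1$, giving the refined sequence $1\to\{\pm 1\}\to G(\mathcal A)\to S_n$.

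Finally, (5) is immediate: for even $n\geq 4$, part (3) identifies $\Aut(\VA)$ with $G(\mathcal A)$, while the sequence from (4) bounds $|G(\mathcal A)|\leq 2\cdot|S_n|=2\cdot n!$, which is finite. I do not expect any real obstacle here---the lemma is elementary---so the only step deserving genuine care is the verification that $\pi$ is well defined and multiplicative (equivalently, that the maps in (E1.0.1) are closed under composition, with the bookkeeping of the scalars $s_i r_{\tau(i)}$), since everything after that is routine finite group theory once $Z$ has been pinned down.
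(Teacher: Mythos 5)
Your proposal is correct and follows essentially the same route as the paper: both use the projection $g(\sigma,\{r_i\})\mapsto\sigma$, compute the kernel $Z$ from the condition $a_{ij}=r_ir_ja_{ij}$ (so $r_ir_j=1$ whenever $a_{ij}\neq 0$), use $n\geq 3$ to force $r_i=\pm 1$ with a common sign in part (4), and deduce (5) by combining (3) and (4). Your explicit verification that $\pi$ is multiplicative, with the scalar bookkeeping $gh=g(\sigma\tau,\{s_ir_{\tau(i)}\})$, is a detail the paper leaves implicit but is consistent with it.
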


\begin{proof} 
(1) By a direct computation, $G({\mathcal A})\subseteq
\Aut_{\af}(\VA)$.

Now fix $g\in \Aut_{\af}(\VA)$. By \cite[Lemma 4.3]{CPWZ1},
there is a permutation $\sigma\in S_n$ and scalars $r_i\in k^\times$
such that $g(x_i)=r_i x_{\sigma(i)}$ for all $i$. Applying $g$
to the relations of $\VA$, one obtains that
$a_{ij}=r_i r_j a_{\sigma(i)\sigma(j)}$ for all $i,j$. So $g\in G({\mathcal A})$.
Thus $\Aut_{\af}(\VA)\subseteq G({\mathcal A})$.

(2) The map from $G({\mathcal A})$ to $S_n$ takes
$g(\sigma, \{r_1,\dots,r_n\})$ to $\sigma$. 
So the kernel $Z$ of this map consists of all $g(Id, \{r_1,\dots,r_n\})$
where $a_{ij}=r_i r_ja_{ij}$ for all $i\neq j$. Since each $r_i$ is in 
$k^\times$, $Z$ is a subgroup of $(k^\times)^n$. Since $Z$ is abelian,
$G({\mathcal A})$ is virtually abelian by definition.

(3) By \cite[Theorem 4.9(3)]{CPWZ1}, $\Aut(\VA)=
\Aut_{\af}(\VA)$. The assertion follows from part (1).

(4) By the proof of part (2), $Z$ consists of all $g(Id, \{r_1,\dots,r_n\})$
where $\{r_i\}_{i=1}^n $ satisfy $a_{ij}=r_i r_ja_{ij}$ for all $i\neq j$. 
Since $a_{ij}\neq 0$, we have $r_ir_j=1$ for all $i<j$. Since $n\geq 3$, we 
obtain that $r_i=r_j$ for all $i<j$ and so $r_i=1$ or $r_i=-1$. 

(5) This follows from parts (3,4).
\end{proof}

Let $G_{1}({\mathcal A})$ denote the subgroup of $G({\mathcal A})$
consisting of all $g(\sigma, \{r_1,\dots,r_n\})$ such that
$\prod_{i=1}^n r_i=1$.  If ${\mathcal A}=\{1\}_{i<j}$ (i.e., the
$W_{n}$ case) and $n\geq 1$ is even, then $G({\mathcal A})=
G_{1}({\mathcal A})$ by \cite[Theorem 1]{CPWZ1}. We give an example
for which $G({\mathcal A})\neq G_{1}({\mathcal A})$ when $n=4$.

\begin{example}
\label{yyex1.2} Let $n=4$ and $(a_{ij})_{4\times 4}=
\begin{pmatrix} * & 1 & -1& 1\\
1& * & 1& -a^2 \\-1& 1& * & -a\\
1& -a^2& -a& *\end{pmatrix}$ where $a^3=-1$ and $a\neq -1$. It is easy to 
check that $g((123), \{1,1,-1,a\})$ is in $G({\mathcal A})$, but not in 
$G_1({\mathcal A})$. 
\end{example}

We refer to \cite[Definition 0.6]{JZ} for the definitions of
Auslander-Gorenstein and GKdim-Macaulay.

\begin{theorem}
\label{yythm1.3}
Let $G$ be a finite subgroup of $G_{1}({\mathcal A})$. Suppose that 
$\ch k$ does not divide $|G|$. Then the fixed subring $\VA^G$ 
is filtered Artin-Schelter Gorenstein, Auslander-Gorenstein and 
$\GKdim$-Macaulay.
\end{theorem}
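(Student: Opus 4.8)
The plan is to reduce the statement about the filtered algebra $\VA$ to a statement about its associated graded algebra $\gr_F \VA \cong k_{-1}[x_1,\dots,x_n]$, where the relevant homological results are already available in the noncommutative invariant theory literature. First I would observe that the $G$-action on $\VA$ respects the filtration $F$ introduced above (each generator $x_i$ lies in $F_1$ and $g(\sigma,\{r_i\})$ sends $F_1$ to $F_1$), so the action descends to a graded $G$-action on $\gr_F \VA = k_{-1}[x_1,\dots,x_n]$. The key point is that the induced action on the graded algebra is again of the form $x_i \mapsto r_i x_{\sigma(i)}$ with the same $\sigma$ and the same scalars $r_i$, since passing to the associated graded merely drops the constant terms $a_{ij}$ from the relations. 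In particular, the condition $\prod_i r_i = 1$ defining $G_1(\mathcal A)$ is preserved, so $G$ maps to a finite subgroup of the corresponding graded automorphism group of $k_{-1}[x_1,\dots,x_n]$.

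Next I would invoke the graded invariant theory. The skew polynomial ring $k_{-1}[x_1,\dots,x_n]$ is Artin-Schelter regular, Auslander-regular, and GKdim-Macaulay, and the action of $G$ is by graded automorphisms of the form \eqref{E1.0.1}. The hypothesis $\prod_i r_i = 1$ is precisely the condition that each $g \in G$ has trivial homological determinant (this is where the results of \cite{KKZ1} on $S_n$- and more general monomial actions on $k_{-1}[x_1,\dots,x_n]$, together with \cite{JZ}, enter): for a monomial automorphism $g(\sigma,\{r_i\})$ one computes $\hdet g = \prod_i r_i$, so $G \subseteq G_1(\mathcal A)$ forces $\hdet|_G$ to be trivial. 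The appropriate noncommutative Watanabe-type theorem from \cite{JZ, KKZ1} then gives that the graded fixed ring $(k_{-1}[x_1,\dots,x_n])^G$ is Artin-Schelter Gorenstein, Auslander-Gorenstein, and GKdim-Macaulay, using that $\ch k \nmid |G|$ so that the invariants behave well (the Reynolds operator exists and fixed rings commute with taking associated graded).

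Finally I would transfer these graded conclusions back to the filtered algebra. The filtration $F$ on $\VA$ restricts to a filtration on the fixed subring $\VA^G$, and because $\ch k \nmid |G|$ the averaging idempotent $\frac{1}{|G|}\sum_{g\in G} g$ commutes with the filtration, so taking invariants is exact on associated graded pieces; this yields a natural isomorphism $\gr_F(\VA^G) \cong (\gr_F \VA)^G = (k_{-1}[x_1,\dots,x_n])^G$. Since the latter is Artin-Schelter Gorenstein, $\VA^G$ is by definition filtered Artin-Schelter Gorenstein. The Auslander-Gorenstein and GKdim-Macaulay properties likewise lift from the associated graded ring to the filtered ring by the standard filtered-to-graded transfer results cited in \cite{JZ}.

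I expect the main obstacle to be the identification $\gr_F(\VA^G) \cong (\gr_F \VA)^G$, i.e. verifying that forming invariants commutes with passing to the associated graded. This is exactly where the hypothesis $\ch k \nmid |G|$ is essential, since it guarantees the Reynolds operator is a filtered projection and hence induces the Reynolds operator on the graded level; without it the comparison map $\gr_F(\VA^G) \to (\gr_F \VA)^G$ need only be injective and could fail to be surjective. A secondary technical point is confirming that the homological determinant of a monomial automorphism of $k_{-1}[x_1,\dots,x_n]$ is indeed $\prod_i r_i$, which one reads off from the action on the one-dimensional top local cohomology (equivalently, the top exterior-type power), and that the trivial-$\hdet$ hypothesis is what the Gorenstein criterion of \cite{JZ, KKZ1} requires.
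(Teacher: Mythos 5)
Your proposal is correct and takes essentially the same route as the paper: both reduce to the associated graded algebra $k_{-1}[x_1,\dots,x_n]$, show each $g(\sigma,\{r_1,\dots,r_n\})\in G_1({\mathcal A})$ has trivial homological determinant by factoring it into a permutation part (hdet $=1$ by \cite{KKZ1}) and a diagonal part (hdet $=\prod_i r_i=1$), and then apply the Gorenstein machinery of \cite{JZ}. The only differences are packaging: the paper cites \cite[Theorem 3.5]{JZ} directly, which already subsumes the filtered-to-graded transfer $\gr_F(\VA^G)\cong(\gr_F \VA)^G$ that you verify by hand via the Reynolds operator (and uses $\hdet_{\VA}g=\hdet_{\gr \VA}g$ by definition), while your parenthetical heuristic of reading hdet off the top monomial should be treated with care, since the naive action on $x_1\cdots x_n$ would suggest the incorrect value $\mathrm{sgn}(\sigma)\prod_i r_i$ --- the triviality of hdet for permutations is a genuine theorem of \cite{KKZ1}, which you do correctly cite.
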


\begin{proof} First of all, we note that both $k_{-1}[x_1,\dots,x_n]$ and 
$\VA$ are filtered Artin-Schelter Gorenstein, Auslander-Gorenstein and 
$\GKdim$-Macaulay. We claim that, for each $g\in G\subset 
G_{1}({\mathcal A})$, its homological determinant, denoted by $\hdet_{\VA} g$, 
is 1. If the claim holds, then the assertion follows from 
\cite[Theorem 3.5]{JZ}. 

Now we prove the claim. By definition \cite[p. 677]{JZ}, 
$\hdet_{\VA} g=\hdet _{\gr \VA} g$. To show the claim, we consider
the algebra $\gr \VA\cong k_{-1}[x_1,\dots,x_n]:=B$ and identify 
$G$ with a subgroup of $\Aut(B)$ in the natural way. For any 
element $g=g(\sigma, \{r_1,\dots,r_n\})\in G$, there is a decomposition
\[
g(\sigma, \{r_1,\dots,r_n\})=g(\sigma,\{1,\dots,1\})\circ 
g(Id, \{r_1,\dots,r_n\})
\]
as in $\Aut(B)$. Then the homological determinant 
of $g(\sigma,\{1,\dots,1\})$ is 1 by the proof of \cite[Theorem 1.5]{KKZ1}. 
The homological determinant of $g(Id,\{r_1,\dots,r_n\})$ 
is $\prod_{i=1}^n r_i$, which is 1 by the definition of $G_1({\mathcal A})$.
Combining these, we see that $\hdet_{B} g=1$ for all 
$g\in G$. Thus we have proved the claim, and therefore the theorem.
\end{proof}

Now we are ready to prove Theorem \ref{yythm1}.

\begin{theorem}
\label{yythm1.4}
Let $n$ be an even integer $\geq 4$ and $G$ a group acting on
$W_n$ faithfully. Suppose that $\ch k$ does not divide
$2|G|$. Then the fixed subring $W_n^G$ is filtered 
Artin-Schelter Gorenstein, Auslander-Gorenstein, and $\GKdim$-Macaulay.
\end{theorem}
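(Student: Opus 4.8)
The plan is to reduce the statement to Theorem~\ref{yythm1.3} by showing that, for $W_n$ with $n$ even, the group $G$ is a \emph{finite} subgroup of $G_1(\mathcal{A})$, where $\mathcal{A}=\{1\}_{i<j}$ is the defining datum of $W_n$. Once the containment $G\subseteq G_1(\mathcal{A})$ is established and $|G|$ is known to be finite and invertible in $k$, Theorem~\ref{yythm1.3} delivers all three conclusions simultaneously, so no additional homological computation is needed.

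First I would pin down that $G$ is finite. Since the action is faithful, $G$ embeds into $\Aut(W_n)$. For $W_n$ we have $a_{ij}=1\neq 0$ for all $i<j$, and $n\geq 4$ is even, so Lemma~\ref{yylem1.1}(5) applies and shows that $\Aut(W_n)=G(\mathcal{A})$ is finite; hence $G$ is a finite subgroup of $G(\mathcal{A})$.

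Next I would verify the identity $G(\mathcal{A})=G_1(\mathcal{A})$ for $W_n$ with $n$ even. By Lemma~\ref{yylem1.1}(3), every automorphism has the form $g(\sigma,\{r_1,\dots,r_n\})$ as in \eqref{E1.0.1}, and the automorphism condition \eqref{E1.0.2} specialized to $a_{ij}=1$ reads $r_ir_j=1$ for all $i\neq j$. When $n\geq 3$, comparing $r_ir_j=1$ with $r_ir_k=1$ for distinct indices forces all the $r_i$ to equal a common value $r$ with $r^2=1$, so $r=\pm1$ (here $\ch k\neq 2$, which is guaranteed by $\ch k\nmid 2|G|$, makes both cases genuinely distinct). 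Because $n$ is even, $\prod_{i=1}^n r_i=r^n=1$, so $g\in G_1(\mathcal{A})$. Thus $G\subseteq G(\mathcal{A})=G_1(\mathcal{A})$.

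Finally, the hypothesis $\ch k\nmid 2|G|$ in particular gives $\ch k\nmid |G|$, so the hypotheses of Theorem~\ref{yythm1.3} are met and the conclusion follows. The point worth emphasizing is that there is essentially no obstacle to overcome in this last theorem: all of the homological content—the transfer of the Artin--Schelter Gorenstein, Auslander--Gorenstein, and $\GKdim$-Macaulay properties to the fixed subring—is already packaged into Theorem~\ref{yythm1.3} (via \cite{JZ}), and the vanishing of the relevant homological determinants is built into the definition of $G_1(\mathcal{A})$. The only genuinely new input here is the elementary group-theoretic identity $\Aut(W_n)=G_1(\mathcal{A})$, and the sole role of the factor $2$ in the characteristic hypothesis is to keep $-1\neq 1$, so that Lemma~\ref{yylem1.1}(4,5) and the sign analysis above remain valid.
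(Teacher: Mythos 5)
Your proposal is correct and takes essentially the same route as the paper: both reduce the theorem to Theorem~\ref{yythm1.3} by observing that $G$, acting faithfully, is a finite subgroup of $G_1(\{1\}_{i<j})$ because every automorphism of $W_n$ ($n$ even) sends $x_i\mapsto r x_{\sigma(i)}$ with a common sign $r=\pm 1$, whose product over an even number of indices is $1$. The only difference is cosmetic: the paper quotes \cite[Theorem 1]{CPWZ1} to get $\Aut(W_n)=S_n\times\{\pm 1\}=G_1(\{1\}_{i<j})$ directly, whereas you re-derive this containment from Lemma~\ref{yylem1.1}(3,4,5) and the relation \eqref{E1.0.2} --- a valid and slightly more self-contained variant, though it still rests on the same external input through Lemma~\ref{yylem1.1}(3).
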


\begin{proof} Since $n$ is even and $\ch k\neq 2$, by \cite[Theorem 1]{CPWZ1}, 
$\Aut(W_n)=S_n\times \{\pm 1\}$. So $G$ is a subgroup of 
$S_n\times \{\pm 1\}$, which is $G_1(\{1\}_{i<j})$ when $n$ is even. The 
assertion follows from Theorem \ref{yythm1.3}.
\end{proof}

\begin{remark}
\label{yyrem1.5}
\begin{enumerate}
\item[(1)]
Theorem \ref{yythm1} is clearly a consequence of Theorem 
\ref{yythm1.4}: if the group $G$ acts on $W_n$, then $W_n^G =
W_n^{\im G}$, where $\im G$ is the image of $G$ in $\Aut W_n$. Note
that $\im G$ acts faithfully.
\item[(2)]
The hypothesis that ``$\ch k$ does not divide
$2|G|$'' is missing from \cite[Theorem 2]{CPWZ1}. 
We do not know whether $W_{n}^{G}$
is filtered Artin-Schelter Gorenstein when $\ch k$ 
divides $2|G|$.
\item[(3)] 
By \cite[Example 3.4(2) for $D_4$]{CWWZ}, the fixed subring
$W_2^{S_2\times \{\pm 1\}}$ is a commutative Gorenstein algebra with
isolated singularities.  So we expect that, in general, $W_n^G$ is not
regular. (Note that \cite[Example 3.4(2)]{CWWZ} was stated
in terms of the ring $R = k\langle u,v\rangle / (u^{2}+v^{2}-1)$,
but this is isomorphic to $W_2$.)
\end{enumerate}
\end{remark}

For completeness we have the following assertion for $n=2$; see also
\cite[Theorem 0.1]{CWWZ}.

\begin{proposition}
\label{yypro1.6}
Let $G$ be a finite group acting on $W_2$ faithfully. Suppose that 
$\ch k$ does not divide $2|G|$. Then the 
fixed subring $W_2^G$ is filtered Artin-Schelter Gorenstein, 
Auslander-Gorenstein, and $\GKdim$-Macaulay.
\end{proposition}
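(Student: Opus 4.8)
The plan is to imitate the proof of Theorem~\ref{yythm1.4}: reduce to Theorem~\ref{yythm1.3} by exhibiting $G$ as a finite subgroup of $G_1(\{1\}_{i<j})$. Since the action is faithful, $G$ is literally a finite subgroup of $\Aut(W_2)$, and the hypothesis $\ch k\nmid 2|G|$ supplies both $\ch k\nmid|G|$ and $\ch k\neq 2$; the latter is what the automorphism computation below requires (and what underlies the identification $W_2\cong k\langle u,v\rangle/(u^2+v^2-1)$ of Remark~\ref{yyrem1.5}(3)). Once $G\subseteq G_1(\{1\}_{i<j})$ is established, Theorem~\ref{yythm1.3} delivers the conclusion immediately, as $\ch k\nmid|G|$.

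For the key step, note that $n=2$ is even, so Lemma~\ref{yylem1.1}(3) applies and gives $\Aut(W_2)=\Aut_{\af}(W_2)=G(\{1\}_{i<j})$. Hence every $g\in G$ has the form $g(\sigma,\{r_1,r_2\})$ with $\sigma\in S_2$ and $r_1,r_2\in k^\times$. Because $a_{12}=a_{21}=1$, relation~\eqref{E1.0.2} reads $1=r_1r_2$ for both $\sigma=\mathrm{Id}$ and $\sigma=(12)$; thus $\prod_i r_i=1$, that is, $G(\{1\}_{i<j})=G_1(\{1\}_{i<j})$. Therefore $G$ is a finite subgroup of $G_1(\{1\}_{i<j})$ with $\ch k\nmid|G|$, and Theorem~\ref{yythm1.3} shows that $W_2^G$ is filtered Artin-Schelter Gorenstein, Auslander-Gorenstein and $\GKdim$-Macaulay.

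The only delicate ingredient, and the sole place where $n=2$ behaves differently from the even $n\geq 4$ of Theorem~\ref{yythm1.4}, is the appeal to Lemma~\ref{yylem1.1}(3) that every automorphism of $W_2$ is affine. For $n\geq 3$ the constraint $r_ir_j=1$ propagates to $r_i=\pm1$ (Lemma~\ref{yylem1.1}(4)), forcing $\Aut(W_n)$ to be finite; for $n=2$ it does not, and $G_1(\{1\}_{i<j})$ already contains the one-parameter torus $\{g(\mathrm{Id},\{r,r^{-1}\}):r\in k^\times\}$, so $\Aut(W_2)$ is infinite, quite unlike the $S_n\times\{\pm1\}$ of Theorem~\ref{yythm1.4}. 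This causes no difficulty here, since the constraint $\prod_i r_i=1$, equivalently $\hdet_{W_2}g=1$ by the determinant computation inside the proof of Theorem~\ref{yythm1.3}, still holds for every element of $G$. I therefore expect the affineness assertion of Lemma~\ref{yylem1.1}(3) in the case $n=2$ to be the crux of the matter; an independent verification is available through \cite[Theorem 0.1]{CWWZ} via the presentation $k\langle u,v\rangle/(u^2+v^2-1)$ noted in Remark~\ref{yyrem1.5}(3).
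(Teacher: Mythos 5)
Your proof is correct, but it takes a genuinely different route from the paper's. The published proof does not go through Theorem~\ref{yythm1.3} at all: it quotes Alev--Dumas \cite{AD} for $\Aut(W_2)=S_2\ltimes k^\times$, observes that the $G$-action preserves the standard filtration, invokes \cite[Lemma 2.7]{CWWZ} to conclude that the homological determinant of every $g\in G$ is trivial, and then applies \cite[Theorem 3.5]{JZ} directly. You instead stay entirely inside the paper's Section~1 machinery: Lemma~\ref{yylem1.1}(3) (i.e., the discriminant-based affineness result of \cite[Theorem 4.9(3)]{CPWZ1}) identifies $\Aut(W_2)$ with $G(\{1\}_{i<j})$, the relation \eqref{E1.0.2} forces $r_1r_2=1$ so that $G(\{1\}_{i<j})=G_1(\{1\}_{i<j})$ (which is exactly $S_2\ltimes k^\times$, recovering \cite{AD}), and Theorem~\ref{yythm1.3} --- whose hypotheses require only that $G$ be a finite subgroup of $G_1(\mathcal{A})$, not that $G_1(\mathcal{A})$ itself be finite --- finishes the argument; your remark that the infinitude of $\Aut(W_2)$ is harmless here is exactly the right point, and is why Lemma~\ref{yylem1.1}(5) needs $n\geq 4$ while this proposition does not. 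You also correctly identified the crux: everything hinges on Lemma~\ref{yylem1.1}(3) holding at $n=2$, and the paper's stated hypotheses ($n\geq 2$, $n$ even) do cover this case; still, the authors' appeal to \cite{AD} sidesteps that dependence entirely, which is presumably why they chose it, whereas your route buys self-containedness: the trivial homological determinant comes out of the $\prod_i r_i=1$ computation inside the proof of Theorem~\ref{yythm1.3} rather than from the external citation \cite[Lemma 2.7]{CWWZ}.
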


\begin{proof} By \cite{AD}, $\Aut(W_2)=S_2\ltimes k^\times$. 
So $G$ is a finite subgroup of $S_2\ltimes k^\times$. Note that 
the $G$-action preserves the given filtration of $W_2$. By 
\cite[Lemma 2.7]{CWWZ}, the homological determinant of the $G$-action 
is trivial (namely, $\hdet_{W_2} g=1$ for all $g\in G$). The assertion 
follows from \cite[Theorem 3.5]{JZ}.
\end{proof}

By \cite[Theorem 1.5(2)]{KKZ1}, if $G$ is a subgroup of $S_n$ other
than $\{1\}$, then $k_{-1}[x_1,\dots,x_n]^G$ is not regular: its
global dimension is infinite. When $G$ is a special subgroup of $S_n$,
then $k_{-1}[x_1,\dots,x_n]^G$ is a classical complete intersection
\cite[Theorem 3.12]{KKZ1}. Some version of this should be true for
$W_n$. Here we only give one simple example.

\begin{example}
\label{yyex1.7}
Some of the arguments in this example were motivated by 
\cite[Example 3.1]{KKZ1}.

Let $S_2$ act on $W_2$ in the obvious way. Then $A:=W_2^{S_{2}}$ is a 
complete intersection, in the sense that it is isomorphic to $B/(f)$
for some regular algebra $B$ and some normal nonzerodivisor $f\in B$.
To see this, we note that $A$ is generated by $u=x_1+x_2$ and 
$v=x_1^3+x_2^3$ satisfying two relations $[u^2,v]=[u,v^2]=0$.
By a straightforward computation, we have
\[
r_3:=2 u^6-3u^3 v -3 v u^3 +4 v^2-5u^4+3u^2+4(uv+vu)=0.
\]
One can check that $A$ is isomorphic to $B/(r_3)$, where
$B$ is the down-up algebra $k\langle u,v\rangle/([u^2,v]=[u,v^2])$.
\end{example}

\begin{proposition}
\label{yypro1.8} Let $G$ be the subgroup $\{\pm 1\}$ of $\Aut(\VA)$.
If $a_{ij}\neq 0$ for some $i<j$, then $\VA^G$ is regular.
\end{proposition}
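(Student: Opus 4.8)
The plan is to realize $\VA^G$ as the identity component of a $\Z/2$-grading on $\VA$ and to use the hypothesis that some $a_{ij}\neq 0$ to show that this grading is \emph{strong}; finiteness of the global dimension of the fixed ring is then a formal consequence of finiteness of the global dimension of $\VA$ itself. (Here ``regular'' is taken to mean finite global dimension, consistently with the use of \cite[Theorem 1.5(2)]{KKZ1} quoted above.)

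First I would set up the grading. Assigning each generator $x_i$ the degree $1\in\Z/2$, every defining relation $x_ix_j+x_jx_i-a_{ij}$ is homogeneous of degree $0$, so $\VA=V_0\oplus V_1$ becomes a $\Z/2$-graded algebra, where $V_0$ (resp.\ $V_1$) is spanned by the products of an even (resp.\ odd) number of generators. The generator $-1$ of $G$ acts by $(-1)^{\deg}$, so when $\ch k\neq 2$ one has $\VA^G=V_0=:R$; when $\ch k=2$ the group $G$ is trivial and $\VA^G=\VA$ is already regular, so we may assume $\ch k\neq 2$.

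The key step is to make this grading strong. Choosing $i<j$ with $a_{ij}\neq 0$, we have $1=a_{ij}^{-1}(x_ix_j+x_jx_i)\in V_1V_1$, and since $V_1V_1$ is a sub-$R$-bimodule of $V_0=R$ containing $1$, it follows that $V_1V_1=R$. As $V_0V_1=V_1=V_1V_0$ trivially, the grading is strong. This is exactly where the hypothesis enters: for the skew polynomial ring $V_n(\{0\})$ one has $V_1V_1\subsetneq V_0$, the grading is not strong, and the fixed ring is genuinely singular.

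Finally I would deduce regularity. A fixed expression $\sum_\ell u_\ell w_\ell=1$ with $u_\ell,w_\ell\in V_1$ yields $v=\sum_\ell (vu_\ell)w_\ell$ for all $v\in V_1$, where $v\mapsto vu_\ell$ are left $R$-linear maps $V_1\to R$; by the dual basis lemma $V_1$, and hence $\VA=R\oplus V_1$, is finitely generated projective as a left $R$-module, so the restriction to $R$ of any projective $\VA$-module is $R$-projective. For an arbitrary $R$-module $M$, $M$ is a direct summand of $(\VA\otimes_R M)|_R=M\oplus(V_1\otimes_R M)$, so restricting a $\VA$-projective resolution of $\VA\otimes_R M$ gives $\operatorname{pd}_R M\le \operatorname{pd}_{\VA}(\VA\otimes_R M)\le \operatorname{gl.dim}\VA$. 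Since $\VA$ is filtered Artin--Schelter regular, $\operatorname{gl.dim}\VA\le n<\infty$, whence $\operatorname{gl.dim}R<\infty$, i.e.\ $\VA^G$ is regular. (One could alternatively quote Dade's theorem, that $R\text{-}\mathrm{Mod}\simeq\mathrm{gr}\text{-}\VA\text{-}\mathrm{Mod}$ for a strongly graded ring, to identify $\operatorname{gl.dim}R$ with the graded global dimension of $\VA$.)

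The only real content is the strong-grading step, which is short; the homological conclusion is a standard fact about strongly graded rings. The points needing (routine) care are the verification that the $\Z/2$-grading is well defined, i.e.\ that the relations are homogeneous of even parity, and the projectivity of $V_1$ over $R$, for which the explicit dual basis above suffices.
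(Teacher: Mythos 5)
Your proposal is correct and follows essentially the same route as the paper: assign $\deg x_i = 1 \in \Z/(2)$, identify $\VA^G$ with the degree-$0$ component, and use $a_{ij}\neq 0$ to show the grading is strong. The only difference is cosmetic: where the paper concludes by citing Dade's theorem on strongly graded rings, you make that final homological step self-contained via the dual-basis lemma (and you note the Dade alternative yourself), and you also handle the degenerate $\ch k = 2$ case explicitly.
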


\begin{proof} Define $\deg x_i=1\in \Z/(2)$. This makes $\VA$
into a $\Z/(2)$-graded algebra. Since the automorphism $-1$
will act on a length $s$ monomial $m=x_{i_{1}} \dots x_{i_{s}}$ by
multiplication by $(-1)^{s}$, such a monomial is in $\VA^G$ if and
only if $s$ is even, which holds if and only if $m$ is in the degree
$0$ part $(\VA)_0$ of $\VA$. Thus $\VA^G = (\VA)_0$. Since
$x_ix_j+x_jx_i=a_{ij}\neq 0$ for some $i<j$, $\VA$ is strongly 
$\Z/(2)$-graded.  It is well known that $\VA$ is regular, 
so $(\VA)_0$ is regular of the same global dimension by a theorem 
of Dade's \cite[Theorem 2.8]{Da} that the categories of
$(\VA)_0$-modules and $\Z/(2)$-graded $\VA$-modules are
equivalent.
\end{proof}

\section{Proof of Theorem \ref{yythm2}}
\label{yysec2}
In this section we assume that $k$ is a commutative domain.

Let $A$ be an algebra generated over $k$ by $x_1,x_2,\dots,x_t$ for some 
$t\geq 3$, and let $R$ be the subalgebra generated by $x_3, \dots,
x_t$. Suppose we have an algebra automorphism of $A$ of 
the following form:
\begin{equation}
\label{E2.0.1}\tag{E2.0.1}
g(x_i)=\begin{cases} x_1+a_0+ a_1 x_2  & i=1,\\
x_i & i\neq 1,\end{cases}
\end{equation}
where $a_0,a_1$ are in the subalgebra $R$, and in particular are
fixed by $g$. The following lemma is easy.

\begin{lemma}
\label{yylem2.1} Let $n$ be an integer. Then 
\[
g^{n}(x_i)=\begin{cases} x_1+na_0+na_1 x_2 & i=1,\\
x_i & i\neq 1.\end{cases}
\]
\end{lemma}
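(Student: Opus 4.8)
The plan is to prove this by induction on $n$, first establishing the result for nonnegative integers and then extending to negative $n$ via the fact that $g$ is an automorphism, hence invertible.

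First I would treat the case $n \geq 0$ by induction. The base case $n=0$ is immediate, since $g^0$ is the identity and the formula reads $g^0(x_1) = x_1$, with $g^0(x_i) = x_i$ for $i \neq 1$. For the inductive step, I would assume the formula holds for some $n \geq 0$ and compute $g^{n+1}(x_1) = g(g^n(x_1)) = g(x_1 + na_0 + na_1 x_2)$. The key observation is that $a_0, a_1 \in R$ are fixed by $g$, and $x_2$ is fixed by $g$ (since $2 \neq 1$), so applying $g$ to the expression gives $g(x_1) + na_0 + na_1 x_2 = (x_1 + a_0 + a_1 x_2) + na_0 + na_1 x_2 = x_1 + (n+1)a_0 + (n+1)a_1 x_2$, as desired. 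For $i \neq 1$ the claim is trivial since each such $x_i$ is fixed by $g$, hence by every power of $g$.

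To handle negative $n$, I would verify that the displayed formula for $g^{-1}$ (obtained by replacing $n$ with $-1$) is indeed a two-sided inverse of $g$. Define a map $h$ on generators by $h(x_1) = x_1 - a_0 - a_1 x_2$ and $h(x_i) = x_i$ for $i \neq 1$; this extends to an algebra endomorphism, and a direct check using that $a_0, a_1, x_2$ are fixed shows $h \circ g = g \circ h = \mathrm{id}$ on generators, so $h = g^{-1}$. Then the same inductive argument, now run in the negative direction (or applied to $h$ in place of $g$, with $a_0, a_1$ replaced by $-a_0, -a_1$), yields the formula for all negative $n$.

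I do not expect any genuine obstacle here; the lemma is essentially bookkeeping, and the only point requiring a moment's care is the repeated use of the hypothesis that $a_0, a_1$ lie in the subalgebra $R$ and that $x_2 \in R$, so that all three are fixed by $g$ and therefore commute past every application of $g$ in the induction. The mild subtlety worth flagging explicitly is that $a_0$ and $a_1$ need not be central in $A$, so when expanding $g^n(x_1)$ one must keep the term $a_1 x_2$ in the stated order rather than rearranging factors; however, this ordering is preserved throughout because $g$ fixes both $a_1$ and $x_2$, so no commutation relations in $A$ are ever invoked.
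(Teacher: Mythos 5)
Your proof is correct, and it is exactly the routine induction the paper has in mind: the paper states this lemma without proof (``The following lemma is easy''), relying on the observation you make, namely that $a_0$, $a_1$, and $x_2$ all lie in the part of $A$ fixed by $g$, so each application of $g$ simply adds one more $a_0 + a_1 x_2$. One small repair for the negative case: your assertion that the map $h$ ``extends to an algebra endomorphism'' is not automatic for an arbitrary $A$ (one would have to check relations), but it is unnecessary --- since $g$ is assumed to be an automorphism, $g^{-1}$ already exists, and applying $g^{-1}$ to the identity $g(x_1) = x_1 + a_0 + a_1 x_2$, using that $g^{-1}$ also fixes $a_0$, $a_1$, and $x_i$ for $i \neq 1$, forces $g^{-1}(x_1) = x_1 - a_0 - a_1 x_2$, after which your induction applies verbatim to $g^{-1}$.
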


Assume that, when $n$ and $m$ are any elements in $k$, the following 
are algebra isomorphisms of $A$:
\begin{equation}
\label{E2.1.1}\tag{E2.1.1}
g^{n}(x_i)=\begin{cases} x_1+na_0+na_1 x_2 & i=1,\\
x_i & i\neq 1,\end{cases}
\end{equation}
and
\begin{equation}
\label{E2.1.2}\tag{E2.1.2}
h^{m}(x_i)=\begin{cases} x_2+mb_0+mb_1 x_1 & i=2,\\
x_i & i\neq 2.\end{cases}
\end{equation}
This is automatic if $\ch k=0$. 
We abuse notation and denote these by $g^{n}$ and $h^m$, even if $n$ and 
$m$ are not integers. It is easy to check that $g^{n} g^{n'} = g^{n+n'}$.

By symmetry, if we also assume that we have an algebra automorphism
\begin{equation}
\label{E2.1.3}\tag{E2.1.3}
h(x_i)=\begin{cases} x_2+b_0+ b_1 x_1 & i=2,\\
x_i & i\neq 2,\end{cases}
\end{equation}
where $b_{0}, b_{1} \in R$, then for any $m \in k$, we have
\[
h^{m}(x_i)=\begin{cases} x_2+mb_0+mb_1 x_1 & i=2,\\
x_i & i\neq 2.\end{cases}
\]

\begin{lemma}
\label{yylem2.2} For any $m,n \in k$,
\[
h^{m}g^{n}(x_i)=\begin{cases} x_1+(na_0+na_1mb_0)
+na_1 x_2+na_1mb_1x_1& i=1,\\
x_2+mb_0+mb_1 x_1 & i=2,\\
x_i & i\neq 1,2,\end{cases}
\]
and
\[
g^{n}h^{m}(x_i)=\begin{cases} x_1+na_0+na_1 x_2& i=1,\\
x_2+(mb_0+mb_1na_0)+mb_1x_1+mb_1na_1x_2 & i=2,\\
x_i & i\neq 1,2.\end{cases}
\]
\end{lemma}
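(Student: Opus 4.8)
The plan is to verify both displayed identities by direct composition, exploiting two facts: that $g^n$ and $h^m$ are algebra homomorphisms, and that the coefficients $a_0,a_1,b_0,b_1$ lie in the subalgebra $R$ generated by $x_3,\dots,x_t$, and are hence fixed by both maps. Everything then reduces to substituting the formulas \eqref{E2.1.1} and \eqref{E2.1.2} into one another and collecting terms.

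First I would treat $h^{m}g^{n}$, read as $h^m\circ g^n$, so that $g^n$ is applied first. For $i\neq 1,2$ both maps fix $x_i$, and for $i=2$ the map $g^n$ fixes $x_2$ by \eqref{E2.1.1}, so $h^{m}g^{n}(x_2)=h^m(x_2)=x_2+mb_0+mb_1x_1$, matching the claim. The only substantive case is $i=1$: by \eqref{E2.1.1} we have $g^n(x_1)=x_1+na_0+na_1x_2$, and applying $h^m$ as a homomorphism leaves $x_1$, $a_0$ and $a_1$ unchanged (the latter two because $a_0,a_1\in R$), while sending $x_2$ to $x_2+mb_0+mb_1x_1$. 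Expanding $x_1+na_0+na_1(x_2+mb_0+mb_1x_1)$ and grouping the $R$-scalar, $x_2$- and $x_1$-contributions reproduces the stated expression exactly.

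The formula for $g^{n}h^{m}$ follows by the identical argument with the roles of $g$ and $h$, and of the indices $1$ and $2$, interchanged; indeed it is forced by the built-in symmetry of the setup \eqref{E2.1.1}--\eqref{E2.1.3}, so it suffices to run the computation once and read off the symmetric statement.

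There is no genuine obstacle here: the content is routine bookkeeping, linear over $R$. The single point meriting care is the assertion that $h^m$ fixes $a_0,a_1$ and that $g^n$ fixes $b_0,b_1$. This is precisely where the hypothesis $a_0,a_1,b_0,b_1\in R$ enters, since by construction $g$ and $h$ restrict to the identity on the subalgebra $R$. Were the coefficients allowed to involve $x_1$ or $x_2$, they would transform under the maps and the composite formulas would not close up in the clean shape recorded in the statement.
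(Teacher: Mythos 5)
Your computation is correct and is exactly the routine verification the paper has in mind (it states Lemma~\ref{yylem2.2} without proof as an immediate consequence of \eqref{E2.1.1} and \eqref{E2.1.2}): you read $h^m g^n$ as $h^m\circ g^n$, which is consistent with how the paper uses the lemma in the proof of Lemma~\ref{yylem2.3}, and you correctly isolate the one point of substance, namely that $a_0,a_1,b_0,b_1\in R$ are fixed by both automorphisms since these fix the generators $x_3,\dots,x_t$ of $R$. Nothing further is needed.
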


Let $z$ be a positive integer and $n_1,\dots,n_z, m_1,\dots,m_z$ 
nonzero elements in $k$. 
Define automorphisms
\begin{align*}
\tau_z &= g^{n_z} h^{m_{z-1}} g^{n_{z-1}} \cdots h^{m_1}g^{n_1}, \\
\sigma_z &= h^{m_z} \tau_{z} = h^{m_z} g^{n_z}
h^{m_{z-1}} g^{n_{z-1}} \cdots h^{m_1}g^{n_1}.
\end{align*}

\begin{lemma}
\label{yylem2.3} Retain the above notation. We have
\[
\tau_z(x_1)=d_0+ d_1 x_1+ d_2 x_2+ \left(\prod_{s=1}^{z-1} m_s\right) 
\left(\prod_{s=1}^z n_s \right) a_1 (b_1a_1)^{z-1} x_2
\]
where $d_0,d_1,d_2$ are elements in $R$. Further,
\[
d_1=\sum_{s=0}^{z-1} \alpha_s (a_1b_1)^{s} \quad
{\text{and}}\quad
d_2=\sum_{s=0}^{z-2} \beta_s a_1(b_1a_1)^s
\]
where $\alpha_s,\beta_s$ are elements in the subring of $k$ generated
by $n_1,\dots,n_z,m_1,\dots,m_z$.
\end{lemma}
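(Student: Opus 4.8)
The plan is to prove the formula by induction on $z$, exploiting the recursion $\tau_{z+1} = g^{n_{z+1}} h^{m_z} \tau_z$ that is built into the definition. The base case $z=1$ is immediate: since $\tau_1 = g^{n_1}$, Lemma~\ref{yylem2.1} gives $\tau_1(x_1) = n_1 a_0 + x_1 + n_1 a_1 x_2$, and one reads off $d_0 = n_1 a_0$, $d_1 = 1$ (so $\alpha_0 = 1$), $d_2 = 0$ (empty sum), and leading $x_2$-coefficient $n_1 a_1$, matching the asserted formula since $\prod_{s=1}^{0} m_s = 1$ and $a_1(b_1 a_1)^0 = a_1$. For the inductive step I would record the hypothesis as $\tau_z(x_1) = d_0 + d_1 x_1 + c_2 x_2$ with $d_0, d_1, c_2 \in R$, where $c_2 = d_2 + E_z$ and $E_z := \bigl(\prod_{s=1}^{z-1} m_s\bigr)\bigl(\prod_{s=1}^{z} n_s\bigr)\, a_1 (b_1 a_1)^{z-1}$ is the distinguished leading term.

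The crucial structural input is that both $g^{n}$ and $h^{m}$ fix $R$ pointwise --- because $a_0,a_1,b_0,b_1 \in R$ and $R$ is generated by the fixed variables $x_3,\dots,x_t$ --- and fix every $x_i$ except $x_1$ (for $g^n$) or $x_2$ (for $h^m$). Thus applying $h^{m_z}$ and then $g^{n_{z+1}}$ to the hypothesis merely substitutes $x_2 \mapsto x_2 + m_z b_0 + m_z b_1 x_1$ and $x_1 \mapsto x_1 + n_{z+1} a_0 + n_{z+1} a_1 x_2$, leaving the left coefficients $d_0, d_1, c_2$ untouched. Expanding while keeping every coefficient on the left, I get
\[
\tau_{z+1}(x_1) = d_0' + (d_1 + m_z c_2 b_1)\, x_1 + \bigl(c_2 + n_{z+1}(d_1 + m_z c_2 b_1)a_1\bigr)\, x_2,
\]
with $d_0' \in R$ collecting the constant terms. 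It then remains to verify that these two coefficients have exactly the asserted shapes, which is bookkeeping in the two ``letters'' $a_1, b_1$ governed by the identities $(a_1 b_1)^s a_1 = a_1(b_1 a_1)^s$, $a_1(b_1 a_1)^s b_1 = (a_1 b_1)^{s+1}$, and $a_1(b_1 a_1)^{z-1}(b_1 a_1) = a_1(b_1 a_1)^z$. The first two turn the $d_2$-part of $c_2$ into increments of the new $d_1^{(z+1)} = \sum_{s=0}^{z}\alpha'_s (a_1 b_1)^s$ and of the new $d_2^{(z+1)} = \sum_{s=0}^{z-1}\beta'_s a_1(b_1 a_1)^s$; throughout, the new scalars $\alpha'_s,\beta'_s$ arise from $\alpha_s,\beta_s$ by multiplication by $m_z$ and $n_{z+1}$, hence still lie in the subring of $k$ generated by the $n$'s and $m$'s. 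The third identity shows the single longest word is produced only by the summand $n_{z+1}m_z E_z b_1 a_1 = \bigl(\prod_{s=1}^{z} m_s\bigr)\bigl(\prod_{s=1}^{z+1} n_s\bigr)\, a_1(b_1 a_1)^z = E_{z+1}$, while everything of strictly shorter length --- in particular the old leader $E_z$, which becomes the $s=z-1$ term of the new $d_2$ --- is absorbed into $d_2^{(z+1)}$. This closes the induction.

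I expect the only genuine subtlety to be the non-commutativity: $R$ does not commute with $x_1,x_2$, so the computation is legitimate only because the ``triangular'' form of $g^n, h^m$ lets me keep all coefficients on the left and never commute an element of $R$ across $x_1$ or $x_2$. The one substantive point beyond routine expansion is isolating the leading $x_2$-coefficient; the key observation that makes this clean is that the word-length in $a_1, b_1$ increases by exactly two at each step, so a single strictly longest word always survives as the new $E_{z+1}$ and the previous leader drops unambiguously into $d_2^{(z+1)}$.
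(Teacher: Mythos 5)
Your proposal is correct and follows essentially the same route as the paper: induction on $z$ via the recursion $\tau_{z+1}=g^{n_{z+1}}h^{m_z}\tau_z$, with base case Lemma~\ref{yylem2.1}, expansion by the substitution formulas of Lemma~\ref{yylem2.2} keeping all coefficients on the left, and the same recursions for the coefficients of $1, x_1, x_2$. Your explicit tracking of the word identities $(a_1b_1)^s a_1 = a_1(b_1a_1)^s$ and of how the unique longest word $E_{z+1}$ arises from $n_{z+1}m_z E_z b_1 a_1$ merely spells out what the paper compresses into ``the formulas for $d_1$ and $d_2$ follow easily by induction.''
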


\begin{proof}
When $z=1$, this is Lemma \ref{yylem2.1}. Now assume that $z>1$.
By definition, $\tau_{z}=g^{n_z}h^{m_{z-1}}\tau_{z-1}$. By a direct 
computation and the induction hypothesis,
\[
\begin{aligned}
\tau_z(x_1)&=g^{n_z}h^{m_{z-1}}\tau_{z-1}(x_1)\\
&=g^{n_z}h^{m_{z-1}} (d'_0+ d'_1 x_1+ d'_2 x_2+ \left(\prod_{s=1}^{z-2} m_s\right)
\left(\prod_{s=1}^{z-1} n_s \right) a_1(b_1a_1)^{z-2} x_2)\\ 
&=d'_0+d'_1(x_1+n_za_0+n_za_1x_2)\\
&\qquad +d'_2(x_2+(m_{z-1}b_0+m_{z-1}b_1n_za_0)
+m_{z-1}b_1x_1+m_{z-1}b_1n_za_1 x_2)\\
&\qquad + \left(\prod_{s=1}^{z-2} m_s\right) 
\left(\prod_{s=1}^{z-1} n_s \right)a_1(b_1a_1)^{z-2} \cdot\\ 
&\qquad\qquad 
(x_2+(m_{z-1}b_0+m_{z-1}b_1n_za_0)
+m_{z-1}b_1x_1+m_{z-1}b_1n_za_1 x_2)\\
&=d_0+ d_1 x_1+ d_2 x_2+ \left(\prod_{s=1}^{z-1} m_s \right) 
\left(\prod_{s=1}^z n_s \right) a_1(b_1a_1)^{z-1} x_2
\end{aligned}
\]
where
\[
\begin{aligned}
d_0&=d'_0+d'_1n_za_0+d'_2(m_{z-1}b_0+m_{z-1}b_1n_za_0)\\
&\qquad 
+ \left(\prod_{s=1}^{z-2} m_s\right) 
\left(\prod_{s=1}^{z-1} n_s \right)a_1(b_1a_1)^{z-2}(m_{z-1}b_0+m_{z-1}b_1n_za_0),
\\
d_1&=d'_1+d'_2m_{z-1}b_1
+ \left(\prod_{s=1}^{z-2} m_s\right) 
\left(\prod_{s=1}^{z-1} n_s \right)a_1(b_1a_1)^{z-2}m_{z-1}b_1,\\
d_2&=d'_1 n_za_1+d'_2+d'_2m_{z-1}b_1n_za_1
+\left(\prod_{s=1}^{z-2} m_s\right) \left(\prod_{s=1}^{z-1} n_s \right)a_1(b_1a_1)^{z-2}.\\
\end{aligned}
\]
The formulas for $d_1$ and $d_2$ follow easily by 
induction.
\end{proof}

In the next proposition, the free product $G*H$ of two groups
$G$ and $H$ will be used. It is well known that every element in
$G*H$ can be expressed uniquely in the form $g_1h_1g_2h_2\cdots g_s h_s$ 
for some $s$ and for some $g_i\in G$, $h_i\in H$ with $h_i\neq 1$ for 
$i\neq s$ and $g_i\neq 1$ for $i\neq 1$. The following lemma 
is well known.

\begin{lemma}
\label{yylem2.4} Let $G$ be a group of order at least 3. Then
$G*G$ contains a free subgroup of rank two.
\end{lemma}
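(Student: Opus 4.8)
The plan is to produce an explicit nonabelian free subgroup inside the kernel of the natural ``folding'' homomorphism $\pi : G * G \to G$ that restricts to the identity on each of the two copies of $G$. Writing $g'$ and $g''$ for the images of $g \in G$ in the first and second free factors, $\pi$ is the homomorphism determined by the universal property via $\pi(g') = \pi(g'') = g$. The key observation is that $N := \ker \pi$ is a free group and that $N$ contains two noncommuting elements as soon as $|G| \geq 3$; since a $2$-generated nonabelian subgroup of a free group is free of rank $2$, this is exactly what we want.

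First I would prove that $N$ is free. Since $\pi$ restricts to an isomorphism on each factor, for any $w \in G * G$ and any nontrivial $c' \in G'$ one has $\pi(w c' w^{-1}) = \pi(w)\, c\, \pi(w)^{-1} \neq 1$, and likewise for the second factor; hence $N$ meets every conjugate of $G'$ and of $G''$ trivially. By the Kurosh subgroup theorem (equivalently, because $N$ then acts freely on the Bass--Serre tree of $G * G$, whose vertex stabilizers are precisely the conjugates of the two factors, and a group acting freely on a tree is free), it follows that $N$ is free.

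Next I would exhibit the two free generators. Using $|G| \geq 3$, choose distinct nontrivial elements $a, b \in G$ and set $u = a'(a'')^{-1}$ and $v = b'(b'')^{-1}$; both lie in $N$ since $\pi(u) = a a^{-1} = 1$ and $\pi(v) = b b^{-1} = 1$. Writing these out via the normal form theorem for free products, $uv = a'\,(a^{-1})''\,b'\,(b^{-1})''$ and $vu = b'\,(b^{-1})''\,a'\,(a^{-1})''$ are both reduced words of length $4$ but with different initial syllables $a'$ and $b'$; by uniqueness of the reduced form together with $a \neq b$, this forces $uv \neq vu$. Thus $\langle u, v\rangle$ is a nonabelian subgroup of the free group $N$, hence itself free, and being $2$-generated it is free of rank exactly $2$.

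The only real obstacle is the first step---verifying that $N$ is genuinely free (in particular torsion-free)---and here the symmetric shape $G * G$ is what makes life easy, since $\pi$ is injective on each factor and so $N$ automatically avoids all the torsion carried by conjugates of $G'$ and $G''$. Everything after that is a short normal-form computation, and the hypothesis $|G| \geq 3$ enters only through the existence of two distinct nontrivial elements $a \neq b$, which is precisely what rules out the degenerate rank-one behaviour of $\Z/2 * \Z/2$ and guarantees rank $2$.
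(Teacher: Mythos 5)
Your proof is correct, but it takes a genuinely different route from the paper's. The paper argues by cases on the orders of elements of $G$: if some $x\in G$ has infinite order, then $\langle x\rangle * \langle x\rangle \cong \Z * \Z$ is already free of rank two inside $G*G$; if some $x$ has finite order $n\geq 3$, the paper cites a theorem of Anshel and Prener to get a free subgroup of rank $(n-1)^2$ inside $\langle x\rangle * \langle x\rangle$; and if $G$ has exponent $2$, it is abelian, contains $(\Z/(2))^{\oplus 2}$, and the same citation applies. Your argument is uniform --- no case split, with the hypothesis $|G|\geq 3$ entering only through the existence of two distinct nontrivial elements $a\neq b$ --- and it replaces the external citation by standard structure theory: the folding map $\pi\colon G*G\to G$ is injective on each conjugate of each free factor, so its kernel $N$ meets every vertex stabilizer of the Bass--Serre tree trivially and is therefore free (Kurosh, or a free action on a tree); then $u=a'(a^{-1})''$ and $v=b'(b^{-1})''$ lie in $N$, the products $uv$ and $vu$ are reduced words of length four with distinct initial syllables $a'$ and $b'$, so $uv\neq vu$ by uniqueness of normal forms; and a $2$-generated nonabelian subgroup of a free group is free of rank exactly two by Nielsen--Schreier, since rank $0$ and rank $1$ free groups are abelian. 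Every step checks out. What each approach buys: the paper's proof is shorter on the page but outsources the substantive content to the Anshel--Prener theorem (and gets larger-rank free subgroups in the torsion cases as a byproduct), while yours is self-contained modulo textbook facts and, unlike the citation-based cases in the paper, exhibits explicit free generators $u,v$.
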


\begin{proof} If $x\in G$ has infinite
order, then $\langle x\rangle * \langle x\rangle$ is a free group
of rank two contained in $G*G$.

If $x$ has order $n\geq 3$, then 
$\langle x\rangle * \langle x\rangle$ contains a free subgroup 
of rank $(n-1)^2$ \cite[Theorem 1]{AP}.

If every element $x$ in $G$ has order 2, then $G$ is abelian and 
$G$ contains $(\Z/(2))^{\oplus 2}$. It is well known
that $(\Z/(2))^{\oplus 2} * (\Z/(2))^{\oplus 2}$ 
contains a free subgroup of rank $(4-1)^2$ \cite[Theorem 1]{AP}. 
\end{proof}

Let $k_0$ be a subring of $k$. 
We say an element $f\in A$ is \emph{transcendental over
$k_0$} if there are no $r_i\in k_0$ with $r_n\neq 0$
such that $\sum_{i=0}^n r_i f^i=0$. 

\begin{proposition}
\label{yypro2.5} Retain the hypotheses and notation as above; in
particular, assume that $g^n$ and $h^m$ from \eqref{E2.1.1} and
\eqref{E2.1.2} are automorphisms of $A$ for all $n,m\in k$.
Assume that $R+Rx_1+Rx_2$ is a free left $R$-module with basis
$\{1,x_1,x_2\}$ and that the element $a_1b_1 \in A$ is transcendental
over a subring $k_0\subseteq k$.
\begin{enumerate}
\item[(1)]
Suppose $m_s$ and $n_s$ are nonzero elements in $k_0$ for all $s$. 
Then neither $\sigma_z$ nor $\tau_z$ is the identity automorphism.
\item[(2)]
$\Aut(A)$ contains the free product $(k_0,+)*(k_0,+)$. As a consequence,
if $k_0\neq \Z/(2)$, then $\Aut(A)$ contains a free subgroup
of rank two.
\end{enumerate}
\end{proposition}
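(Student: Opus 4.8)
The plan is to establish part (1) by isolating a single coefficient of $\tau_z(x_1)$, and then to derive part (2) from the normal form for free products together with Lemma~\ref{yylem2.4}.

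\emph{Part (1).} Since $R+Rx_1+Rx_2$ is free over $R$ with basis $\{1,x_1,x_2\}$, I would read off from Lemma~\ref{yylem2.3} the coefficient $D\in R$ of $x_2$ in the expansion of $\tau_z(x_1)$. Combining the formula for $d_2$ with the extra term and using the identity $a_1(b_1a_1)^s=(a_1b_1)^s a_1$, this coefficient is
\[
D=\Bigl(\sum_{s=0}^{z-1}\gamma_s (a_1b_1)^s\Bigr)a_1,
\]
where $\gamma_s=\beta_s$ for $s\le z-2$ lie in the subring of $k$ generated by the $n_i,m_i$, hence in $k_0$, and the top coefficient $\gamma_{z-1}=\bigl(\prod_{s=1}^{z-1}m_s\bigr)\bigl(\prod_{s=1}^{z}n_s\bigr)$ is nonzero because $k$ is a domain and all $m_s,n_s$ are nonzero. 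The key step is then to show $D\neq 0$: if instead $D=0$, right-multiplying by $b_1$ gives $\sum_{s=1}^{z}\gamma_{s-1}(a_1b_1)^s=0$, a polynomial relation for $a_1b_1$ over $k_0$ whose leading coefficient $\gamma_{z-1}$ is nonzero, contradicting the transcendence of $a_1b_1$. Since the $x_2$-component of $\tau_z(x_1)$ is $D\neq 0$, freeness of the basis forces $\tau_z(x_1)\neq x_1$, so $\tau_z\neq 1$. For $\sigma_z=h^{m_z}\tau_z$ I would observe that $D\in R$ and $h^{m_z}$ fixes $R$ and $x_1$ while sending $x_2\mapsto x_2+m_zb_0+m_zb_1x_1$; hence applying $h^{m_z}$ alters only the $1$- and $x_1$-components, leaving the $x_2$-component equal to $D\neq 0$, so $\sigma_z(x_1)\neq x_1$ and $\sigma_z\neq 1$.

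\emph{Part (2).} Let $G=\{g^n\mid n\in k_0\}$ and $H=\{h^m\mid m\in k_0\}$, which are subgroups of $\Aut(A)$, each isomorphic to $(k_0,+)$ via $g^ng^{n'}=g^{n+n'}$ and its analogue for $h$. I would consider the canonical homomorphism $(k_0,+)*(k_0,+)\to\Aut(A)$ and prove it injective by showing that no nontrivial reduced word maps to the identity. Classifying reduced words by their first and last syllable gives four types: words beginning and ending in $G$ are exactly the $\tau_z$, and words beginning in $H$ and ending in $G$ are exactly the $\sigma_z$, both nontrivial by part (1); a word beginning in $G$ and ending in $H$ is the inverse of a $\sigma_z$-type word, hence nontrivial; and a word beginning and ending in $H$ is handled by the symmetry exchanging $x_1\leftrightarrow x_2$, $g\leftrightarrow h$, $(a_0,a_1)\leftrightarrow(b_0,b_1)$. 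This symmetry preserves all hypotheses once one checks that $b_1a_1$ is also transcendental over $k_0$ (a putative relation for $b_1a_1$ yields one for $a_1b_1$ after left-multiplying by $a_1$ and right-multiplying by $b_1$), so part (1) applies again. Thus $(k_0,+)*(k_0,+)$ embeds in $\Aut(A)$. Finally, if $k_0\neq\Z/(2)$ then $(k_0,+)$ has order at least $3$, so by Lemma~\ref{yylem2.4} the free product contains a free subgroup of rank two, and therefore so does $\Aut(A)$.

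The main obstacle is the bookkeeping in part (1): correctly identifying the degree $z-1$ term of $D$ as a polynomial in $a_1b_1$ with nonzero leading coefficient, and the trick of right-multiplying by $b_1$ to convert the vanishing of $D$ into a genuine algebraic relation contradicting transcendence. A secondary point requiring care is the reduction of an arbitrary reduced word to the $\tau_z$ and $\sigma_z$ forms via inverses and the $x_1\leftrightarrow x_2$ symmetry.
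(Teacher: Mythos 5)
Your proof is correct and follows essentially the same route as the paper: the same $\tau_z$, $\sigma_z$ analysis via Lemma~\ref{yylem2.3}, the same homomorphism $(k_0,+)*(k_0,+)\to\Aut(A)$ with trivial kernel, and the same appeal to Lemma~\ref{yylem2.4}. You additionally make explicit two points the paper compresses into single lines --- the nonvanishing of the $x_2$-coefficient via right-multiplication by $b_1$ to produce a genuine polynomial relation for $a_1b_1$, and the reduction of all four reduced-word types to $\tau_z$ or $\sigma_z$ using inverses and the $x_1\leftrightarrow x_2$ symmetry (with the check that $b_1a_1$ is also transcendental) --- both of which are sound.
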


\begin{proof} (1) Since $a_1b_1$ is transcendental over $k_0$, 
$d_2+(\prod_{s=1}^{z-1} m_s) (\prod_{s=1}^z n_s ) a_1(b_1a_1)^{z-1}$
is not zero. Since $R+Rx_1+Rx_2$ is free, $\tau_{z}(x_1)\neq x_1$ 
by Lemma \ref{yylem2.3}.
So $\tau_z$ is not the identity. Since $h^{-m_z}(x_1)=x_1$, we 
deduce that $\tau_z\neq h^{-m_z}$, so
$\sigma_z$ is not the identity.

(2) There is a natural group map $(k_0,+)*(k_0,+) \to \Aut(A)$
sending $m_z*n_z*m_{z-1}*n_{z-1}*\cdots * m_1* n_1$ to
$h^{m_z}g^{n_z} h^{m_{z-1}} g^{n_{z-1}}\cdots h^{m_1}g^{n_1}$.
By part (1) the kernel of this map is $\{1\}$.
So the main assertion follows. The consequence
follows from Lemma \ref{yylem2.4}.
\end{proof}

Here is a version of \cite[Lemma 5.11]{CPWZ1} for the algebra $\VA$.
First recall from \cite[Example 5.10(2)]{CPWZ1},
\[
\Omega(x_1,\dots,x_t):=\sum_{\sigma\in S_t} (-1)^{|\sigma|}
x_{\sigma(1)}\dots x_{\sigma(t)}.
\]

\begin{lemma}
\label{yylem2.6}
For any $t \leq n$, 
consider the element $\Omega_t = \Omega(x_1, \dots, x_t)$ in
$\VA$. Then $x_i \Omega_t =(-1)^{n-1}\Omega_t x_i$ for all $i=1,\dots,
t$.
\end{lemma}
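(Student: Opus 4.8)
The plan is to carry out the entire computation inside the subalgebra $R$ of $\VA$ generated by $x_1,\dots,x_t$. The only defining relations of $\VA$ involving these generators alone are $x_ix_j+x_jx_i=a_{ij}$ for $1\le i<j\le t$, and since $\gr_F\VA\cong k_{-1}[x_1,\dots,x_n]$ gives $\VA$ the PBW basis $\{x_1^{e_1}\cdots x_n^{e_n}\}$, the subalgebra $R$ is itself a copy of $V_t$ on the letters $x_1,\dots,x_t$, with basis $\{x_1^{e_1}\cdots x_t^{e_t}\}$. Both $\Omega_t$ and every $x_i$ with $i\le t$ lie in $R$, so the commutation of $x_i$ with $\Omega_t$ is intrinsic to $R$ and cannot involve $n$ or the omitted generators; the sign is therefore governed by $t$ alone.

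First I would read off that sign in the associated graded algebra $B=k_{-1}[x_1,\dots,x_n]$, where $x_ix_j=-x_jx_i$ for $i\ne j$ and each $x_i^2$ is central. Every monomial $x_{\sigma(1)}\cdots x_{\sigma(t)}$ occurring in $\Omega_t$ is a product of the $t$ distinct letters $x_1,\dots,x_t$, exactly one of which equals $x_i$; moving a leading $x_i$ past it to the right contributes $-1$ for each of the $t-1$ letters different from $x_i$ and nothing for the single $x_i$ factor. Hence $x_iM=(-1)^{t-1}Mx_i$ for each such monomial $M$, so $x_i\overline{\Omega_t}=(-1)^{t-1}\overline{\Omega_t}\,x_i$ in $B$. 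Thus the sign is $(-1)^{t-1}$, which coincides with the displayed $(-1)^{n-1}$ exactly when $t\equiv n\pmod{2}$, and in particular in the case $t=n$ to which the lemma is applied; for general $t$ the sign in the statement should accordingly read $(-1)^{t-1}$.

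The real work is to promote this graded identity to the exact identity $x_i\Omega_t=(-1)^{t-1}\Omega_t x_i$ in $\VA$ by showing that all lower-order corrections cancel. I would argue by induction on $t$ using the cofactor expansion $\Omega_t=\sum_{j=1}^t(-1)^{j-1}x_j\Omega^{(j)}$, where $\Omega^{(j)}=\Omega(x_1,\dots,\widehat{x_j},\dots,x_t)$. Pushing $x_i$ through with $x_ix_j=a_{ij}-x_jx_i$, the summands with $j\ne i$ (to which the inductive hypothesis applies, since $x_i$ occurs in $\Omega^{(j)}$) rebuild $(-1)^{t-1}\Omega_t x_i$ modulo a cross-term in $x_i\Omega^{(i)}x_i$ and the scalar residues $(-1)^{j-1}a_{ij}\Omega^{(j)}$, while the $j=i$ summand contributes $(-1)^{i-1}x_i^2\Omega^{(i)}$. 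Writing $\Omega_S$ for $\Omega$ formed from the letters indexed by a subset $S$, I would establish simultaneously the companion formula governing how $x_i$ commutes past $\Omega_S$ when $i\notin S$, namely $x_i\Omega_S=(-1)^{|S|}\Omega_Sx_i+C_S$ with $C_S$ an explicit combination of the minors $\Omega_{S\setminus\{s\}}$; applying it to $\Omega^{(i)}$ collapses the two $\Omega^{(i)}$-pieces into $(-1)^{i-1}x_iC_{S\setminus\{i\}}$. The main obstacle is the final cancellation: the residues $(-1)^{j-1}a_{ij}\Omega^{(j)}$ must exactly annihilate $(-1)^{i-1}x_iC_{S\setminus\{i\}}$ after each $\Omega^{(j)}$ is re-expanded to bring its $x_i$ to the front. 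Verifying this matching of inhomogeneous terms is the crux; the remaining bookkeeping of signs and of the central squares $x_i^2$ is routine.
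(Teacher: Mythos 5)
Your proposal contains a genuine gap, acknowledged in your own words: everything before ``the crux'' only establishes the sign in the associated graded algebra $k_{-1}[x_1,\dots,x_n]$, where the identity is a one-line count of transpositions. The entire content of the lemma is the exactness of the identity in $\VA$, i.e.\ that the inhomogeneous terms produced by $x_ix_j=a_{ij}-x_jx_i$ cancel, and that is precisely the step you leave unverified: the companion identity $x_i\Omega_S=(-1)^{|S|}\Omega_Sx_i+C_S$ (for $i\notin S$) is asserted with $C_S$ never determined, and the matching of the residues $(-1)^{j-1}a_{ij}\Omega^{(j)}$ against the $j=i$ contribution is flagged but not carried out. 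As written this is a plan, not a proof. (For what it is worth, the paper gives no details either --- it defers to the proof of [CPWZ1, Lemma 5.11].) The plan is workable: one can prove by simultaneous induction, for $i\notin S=\{s_1<\cdots<s_u\}$, the explicit identity $x_i\Omega_S=(-1)^{u}\Omega_Sx_i+u\sum_{m=1}^{u}(-1)^{m-1}a_{is_m}\Omega_{S\setminus\{s_m\}}$, which makes your cancellation go through; as a sanity check, $\Omega_3=6x_1x_2x_3-3a_{23}x_1+3a_{13}x_2-3a_{12}x_3$ in $\VA$, from which $x_1\Omega_3=\Omega_3x_1$ (sign $(-1)^{3-1}$) follows directly, using that each $x_i^2$ is central. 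Until a coefficient identity of this kind is stated and proved, the lemma has not been established.

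Your sign correction $(-1)^{t-1}$ is right (already $t=2$ shows it: $x_1(x_1x_2-x_2x_1)=-(x_1x_2-x_2x_1)x_1$, since $x_1^2$ is central), but you misidentify where the lemma is used, and this matters. In the proof of Theorem 2 it is applied to $\Omega(x_2,\dots,x_n)$ and $\Omega(x_1,x_3,\dots,x_n)$, i.e.\ with $t=n-1$ letters, not $t=n$. Since $n$ is odd there, the printed sign $(-1)^{n-1}=+1$ and the true sign $(-1)^{t-1}=-1$ \emph{disagree} in the actual application, and it is exactly the anticommutation $x_j\Omega=-\Omega x_j$ given by your corrected sign that makes $g$ and $h$ preserve the relations $x_1x_j+x_jx_1=a_{1j}$. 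So your correction is not merely cosmetic at $t\not\equiv n\pmod 2$ --- it is the statement the paper needs --- and your reassurance that the displayed sign is adequate ``in the case to which the lemma is applied'' is based on a misreading.
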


\begin{proof} The proof is similar to that of \cite[Lemma 5.11]{CPWZ1}
and is therefore omitted.
\end{proof}

Now we are ready to prove Theorem \ref{yythm2}.

\begin{proof}[Proof of Theorem \ref{yythm2}]
Let $n\geq 3$ be odd. By Lemma \ref{yylem2.6}, we have two
automorphisms of $\VA$, as in \cite[Example 5.10(2)]{CPWZ1}:
\[
g: x_i\to \begin{cases}
x_1+\Omega(x_2,x_3\dots,x_n) & i=1,\\
x_i & 2\leq i\leq n,\end{cases}
\]
and
\[
h: x_i\to \begin{cases}
x_2+\Omega(x_1,x_3,\dots,x_n) & i=2,\\
x_i & i=1 \; {\text{or}}\; 3\leq i\leq n.\end{cases}
\]
Using the notation in \eqref{E2.0.1} and \eqref{E2.1.3}, 
both $a_1$ and $b_1$ have leading term
equal to $(-1)^{n-2}(n-1)!\prod_{i=3}^{n} x_i$.
It is easy to see that $R+Rx_1+Rx_2$ is a free left $R$-module
with basis $\{1,x_1,x_2\}$, and it is clear that $a_1b_1$ is transcendental
over $k$. Since $k_0=k\neq \Z/(2)$, 
the assertion follows from Proposition \ref{yypro2.5}(2).
\end{proof}

\begin{remark}
\label{yyrem2.7}
It was claimed in \cite[Example 5.10]{CPWZ1} that $\Aut(W_n)$ is not 
affine when $n$ is odd. This is true when $\ch k$ does not
divide $(n-1)!$; it is not known what happens when $\ch k$ divides
$(n-1)!$. The hypothesis that ``$\ch k$ does not
divide $(n-1)!$'' was missing from \cite[Example 5.10]{CPWZ1}.
\end{remark}

We conclude with a corollary of Proposition~\ref{yypro2.5} which deals
with skew polynomial rings. See \cite{CPWZ3} for more results along
these lines.  Let $\{p_{ij}\in k^{\times }:1\leq i<j\leq n\}$ be a set
of parameters. The \emph{skew polynomial ring}, denoted by
$k_{p_{ij}}[x_{1},\dots,x_{n}]$, is defined to be the algebra
generated by $x_{1},\dots, x_{n}$ subject to the relations
$x_{j}x_{i}=p_{ij}x_{i}x_{j}$ for all $i<j$. We set
$p_{ji}=p_{ij}^{-1}$ and $p_{ii}=p_{jj}=1$ for all $i<j$.  For any
$1\leq s\leq n$, let
\[
T_{s}:=\left\{(d_{1}, \dots,\widehat{d_{s}},\dots,d_{n})\in 
\mathbb{N}^{n-1}:\prod\limits_{\substack{ j=1  \\ j\neq
s}}^{n}p_{ij}^{d_{j}}=p_{is} \ \  \forall i\neq s\right\}.
\]
In [CPWZ2, Theorem 3.8], it was proved that if each $p_{ij}$
is a root of unity and $T_{s}=\emptyset $ for all $s$, then
every automorphism of $k_{p_{ij}}[x_{1},\dots,x_{n}]$ is affine. Note also
that if each $p_{ij}$ is a root of unity and some $T_{s}$ is non-empty,
then $T_{s}$ is in fact infinite. If we drop the assumption on the $p_{ij}$'s
and we allow at most one $T_{s}$ to be infinite, then we can still
understand the automorphism group of $k_{p_{ij}}[x_{1},\dots ,x_{n}]$
-- see [CPWZ2,Theorem 4]. The following result concerns
$\Aut(k_{p_{ij}}[x_{1},\dots,x_{n}])$ in another case. 

\begin{corollary}
\label{yycor2.8}
Let $n\geq 3$ be an integer and let $A:=k_{p_{ij}}[x_{1},\dots,x_{n}]$
be a skew polynomial ring. Suppose that $T_{1}$ contains an element of
the form $(1,d_{3},\dots,d_{n})$ and $T_{2}$ contains an element of
the form $(1,d_{3}^{\prime },\dots,d_{n}^{\prime })$. Suppose that at
least one of $d_{3}, \dots, d_{n}, d_{3}', \dots, d_{n}'$ is
nonzero. Then $\Aut(A)$ contains $(k,+)*(k,+)$. If, further, 
$k\neq \Z/(2)$, then $\Aut(A)$ contains a free group on two
generators.
\end{corollary}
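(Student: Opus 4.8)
The plan is to produce the two automorphisms required by Proposition~\ref{yypro2.5} as ``translation'' maps attached to the given elements of $T_1$ and $T_2$, and then to check the three hypotheses of that proposition with $t=n$ and $k_0=k$. First I would take the element $(1,d_3,\dots,d_n)\in T_1$ (whose first entry is the exponent of $x_2$) and form the monomial $M_1=x_2 x_3^{d_3}\cdots x_n^{d_n}$, claiming that $x_1\mapsto x_1+M_1$, fixing the other generators, is an automorphism $g$. The key computation is that moving $x_i$ (for $i\neq 1$) past $M_1$, using the relations together with $p_{ji}=p_{ij}^{-1}$, produces the scalar $\prod_{j\neq 1}p_{ij}^{-d_j}$, which by the defining condition of $T_1$ equals $p_{i1}^{-1}=p_{1i}$. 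Since $x_i x_1=p_{1i}x_1 x_i$ as well, $M_1$ has exactly the same commutation relation with every other generator as $x_1$ does, so the substitution preserves all defining relations (its inverse sends $x_1\mapsto x_1-M_1$). Rewriting $M_1$ as a scalar multiple of $\bigl(\prod_{j\geq 3}x_j^{d_j}\bigr)x_2$, I can put $g$ into the form \eqref{E2.0.1} with $a_0=0$ and $a_1$ a scalar multiple of the monomial $\prod_{j\geq 3}x_j^{d_j}\in R$.

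Symmetrically, the element $(1,d_3',\dots,d_n')\in T_2$ gives $M_2=x_1 x_3^{d_3'}\cdots x_n^{d_n'}$ and an automorphism $h$ of the form \eqref{E2.1.3} with $b_0=0$ and $b_1$ a scalar multiple of $\prod_{j\geq 3}x_j^{d_j'}\in R$. Because both translations fix all generators but one, and the displaced monomial commutes just like that generator, $g^n$ and $h^m$ are automorphisms for all $n,m\in k$, as \eqref{E2.1.1} and \eqref{E2.1.2} demand. I would then verify the two remaining hypotheses. Freeness of $R+Rx_1+Rx_2$ with basis $\{1,x_1,x_2\}$ is immediate from the standard monomial $k$-basis of the skew polynomial ring: the monomials (monomial in $x_3,\dots,x_n$)$\cdot x_1^{\epsilon_1}x_2^{\epsilon_2}$ with $\epsilon_1,\epsilon_2\in\{0,1\}$ are distinct basis elements. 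For transcendence of $a_1b_1$ over $k_0=k$, I note that $a_1b_1$ is a nonzero scalar multiple of $\prod_{j\geq 3}x_j^{d_j+d_j'}$; the hypothesis that at least one of the $d_j,d_j'$ is nonzero forces the exponent vector to be nonzero, so the powers $(a_1b_1)^i$ are scalar multiples of monomials with pairwise distinct exponent vectors $i\cdot(d_3+d_3',\dots,d_n+d_n')$, hence $k$-linearly independent. This is precisely transcendence over $k$, and it is exactly where the ``at least one nonzero'' hypothesis is used.

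With all the hypotheses of Proposition~\ref{yypro2.5} verified, part (2) of that proposition yields the embedding $(k,+)*(k,+)\hookrightarrow\Aut(A)$, and when $k\neq\Z/(2)$ the same part produces a free subgroup of rank two, completing the argument. The only genuinely delicate point is the commutation computation showing that $M_1$ behaves like $x_1$ (and $M_2$ like $x_2$); everything else is bookkeeping with the monomial basis. I would expect this step to be where care is needed in order to track the product of $p$-scalars correctly and to see that the defining equation of $T_s$ is exactly the condition making the translation an automorphism.
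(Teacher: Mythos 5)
Your proposal is correct and follows essentially the same route as the paper: the same monomials $f_d=x_2x_3^{d_3}\cdots x_n^{d_n}$ and $f_{d'}=x_1x_3^{d_3'}\cdots x_n^{d_n'}$, the same translation automorphisms with $a_1,b_1$ scalar multiples of $\prod_{j\geq 3}x_j^{d_j}$ and $\prod_{j\geq 3}x_j^{d_j'}$, and the same appeal to Proposition~\ref{yypro2.5}(2) after checking freeness of $R+Rx_1+Rx_2$ and transcendence of $a_1b_1$. The only difference is that you verify directly (and correctly) that the translation maps are automorphisms via the commutation computation with the $T_s$-condition, where the paper simply cites \cite[(2.10.1)]{CPWZ2}; your explicit identification of where the ``at least one $d_j$ or $d_j'$ nonzero'' hypothesis enters is a welcome clarification of a step the paper calls clear.
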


\begin{proof}
For $d=(1,d_{3},\dots,d_{n}) \in T_{1}$ and $d^{\prime
}=(1,d_{3}^{\prime },\dots,d_{n}^{\prime }) \in T_{2}$ as in the
statement, define $f_{d}:=x_{2}x_{3}^{d_{3}}\dots x_{n}^{d_{n}}$,
$f_{d^{\prime }}:=x_{1}x_{3}^{d_{3}^{\prime }}\dots
x_{n}^{d_{n}^{\prime }}$. The maps, defined in [CPWZ2, (2.10.1)],
\begin{equation*}
g(f_{d},1):x_{1}\mapsto
\begin{cases}
x_{i} & \text{if }i\neq 1, \\ 
x_{1}+f_{d} & \text{if }i=1,
\end{cases}
\quad \quad 
g(f_{d^{\prime }},2):x_{i}\mapsto
\begin{cases}
x_{i} & \text{if }i\neq 2, \\ 
x_{2}+f_{d^{\prime }} & \text{if }i=2,
\end{cases}
\end{equation*}
extend to algebra automorphisms of $A$. Using the notation
\eqref{E2.0.1} and \eqref{E2.1.3}, $a_{1}=\prod
_{j=3}^{n}p_{j,2}^{d_{j}}x_{3}^{d_{3}} \dots x_{n}^{d_{n}}$ and
$b_{1}=\prod _{j=3}^{n}p_{j,1}^{d_{j}^{\prime }}x_{3}^{d_{3}^{\prime
}} \dots x_{n}^{d_{n}^{\prime }}$. It is clear that $R+Rx_{1}+Rx_{2}$
is a free left $R$-module with basis $\{1,x_{1},x_{2}\}$ and that
$a_{1}b_{1}$ is transcendental over $k$. The assertion follows from
Proposition 2.5(2).
\end{proof}

Note that this applies to the ordinary polynomial ring: if $p_{ij}=1$
for all $i, j$, then each set $T_s$ is all of $\mathbb{N}^{n-1}$.

\section{Proof of Theorem \ref{yythm3}}
\label{yysec3}

In this short section we assume that $k$ is a commutative domain. 
The main idea is to use the discriminant to solve isomorphism questions. Given 
two algebras with similar properties, it is generally very difficult 
to determine whether they are isomorphic. When the discriminant controls
the the automorphism groups of these algebras, it also controls 
the isomorphisms between them.

Given an algebra $A$, we write $d(A/Z(A))$ for the discriminant of $A$
over its center, as defined in \cite{CPWZ1} or \cite{CPWZ2}. The
following lemma is clear.

\begin{lemma}
\label{yylem3.1}
Suppose $A$ and $B$ are PI algebras, and let $g: A\to B$ be an algebra
isomorphism.  Then $g(d(A/Z(A)))=c \; d(B/Z(B))$ for some unit $c\in
Z(B)$.
\end{lemma}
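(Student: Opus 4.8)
The plan is to exploit two facts: that the center of an algebra is an intrinsic invariant, and that the (reduced) trace used to build the discriminant is natural with respect to algebra isomorphisms. First I would recall the relevant definition from \cite{CPWZ1, CPWZ2}: writing $Z=Z(A)$, one chooses a $Z$-basis $\{z_1,\dots,z_w\}$ of $A$ and sets $d(A/Z(A)) = \det\bigl(\mathrm{tr}(z_iz_j)\bigr)_{w\times w}$, where $\mathrm{tr}\colon A\to Z$ is the reduced trace. A change of basis $z_i' = \sum_j P_{ij}z_j$ replaces the trace matrix $M=(\mathrm{tr}(z_iz_j))$ by $PMP^{\mathsf T}$, so the determinant is multiplied by $\det(P)^2$; hence $d(A/Z(A))$ is well-defined only up to the square of a unit of $Z(A)$, which is exactly the source of the scalar $c$ in the statement.

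Next I would observe that an algebra isomorphism $g\colon A\to B$ automatically carries $Z(A)$ onto $Z(B)$, since the center is defined purely in terms of the multiplication; thus $g$ restricts to a ring isomorphism $Z(A)\xrightarrow{\ \sim\ }Z(B)$ which sends units to units. Because the reduced trace is determined intrinsically by the algebra, through its PI degree and the associated central simple algebra over the fraction field of the center, the map $g$ intertwines the two trace maps, that is, $g(\mathrm{tr}_A(a))=\mathrm{tr}_B(g(a))$ for all $a\in A$. This naturality of the reduced trace is the heart of the argument and is the one point genuinely requiring care.

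Granting naturality, the proof concludes by transport of structure. The images $\{g(z_1),\dots,g(z_w)\}$ form a $Z(B)$-basis of $B$, and applying $g$ entrywise to the matrix $(\mathrm{tr}_A(z_iz_j))$ produces the matrix $(\mathrm{tr}_B(g(z_i)g(z_j)))$. Taking determinants gives $g\bigl(d(A/Z(A))\bigr)=\det\bigl(\mathrm{tr}_B(g(z_i)g(z_j))\bigr)$, which is precisely the discriminant of $B$ computed in the basis $\{g(z_i)\}$. Since any two bases change the value of $d(B/Z(B))$ only by the square of a unit of $Z(B)$, this last expression equals $c\,d(B/Z(B))$ for some unit $c\in Z(B)$ (indeed one may take $c$ to be a square). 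The main obstacle, as flagged above, is justifying that the reduced trace is canonical and hence commutes with $g$; for PI algebras this is standard, which is why the lemma can fairly be called clear.
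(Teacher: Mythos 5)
Your proposal is correct and matches the paper's intent exactly: the paper gives no argument at all, dismissing the lemma as ``clear,'' and your three steps---the center is intrinsic so $g$ restricts to an isomorphism $Z(A)\to Z(B)$, the (reduced) trace is natural under algebra isomorphisms, and a change of $Z(B)$-basis alters the discriminant only by $\det(P)^2$, the square of a unit---are precisely the standard justification behind that remark. Nothing needs correcting.
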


We keep the notation for $\VA$ and $\VAM$ as above, except that it is
convenient to distinguish between their generators, so we use
$\{x_{i}\}$ for the generators of $\VA$ and $\{x_{i}'\}$ for the
generators of $\VAM$.

\begin{proof}[Proof of Theorem \ref{yythm3}] First of all, if there exist
$\sigma\in S_n$ and invertible scalars $\lambda_i$ such that
$a'_{ij}=\lambda_i \lambda_j a_{\sigma(i)\sigma(j)}$, then the map
$g$ determined by $x'_i\mapsto \lambda_i x_{\sigma(i)}$ for $i=1,\dots,n$
extends to an algebra isomorphism $\VAM \to \VA$. 

Conversely, let $g: \VAM\to \VA$ be an algebra isomorphism. 
By \cite[Theorem 4.9]{CPWZ1}, 
\[
d:=d(\VA/Z(\VA))=\left(\prod_{i=1}^n x_i^2\right)^{2^{n-1}}+\cwlt,
\]
where $\cwlt$ (``component-wise less than'') is a linear combination
of terms with degree strictly
less than the degree of $(\prod_{i=1}^n x_i^2)^{2^{n-1}}$.
By symmetry, 
\[
d':=d(\VAM/Z(\VAM))=\left(\prod_{i=1}^n (x'_i)^2\right)^{2^{n-1}}+\cwlt.
\]
By Lemma \ref{yylem3.1}, $g(d')=cd$ for some unit $c\in Z(\VA)=
k[x_1^2,\dots,x_n^2]$. So $c$ is a unit in $k^{\times}$. The equation
$g(d')= cd$ implies that
\begin{equation}
\label{E3.1.1}\tag{E3.1.1}
\left(\prod_{i=1}^n (g(x'_i))^2\right)^{2^{n-1}}+g(\cwlt)=
c \left(\prod_{i=1}^n x_i^2\right)^{2^{n-1}}+\cwlt.
\end{equation}
Note that $g(x'_i)$ has degree at least 1 since it is not in $k$.
So each term in $g(\cwlt)$ has degree strictly less than the
degree of $(\prod_{i=1}^n (g(x'_i))^2)^{2^{n-1}}$. This implies that 
the degree of the left-hand term in \eqref{E3.1.1} is equal to 
the degree of $(\prod_{i=1}^n (g(x'_i))^2)^{2^{n-1}}$. The degree of 
the right-hand term of \eqref{E3.1.1} is $n 2^n$. Therefore 
$\deg g(x'_i)=1$ for all $i$. Thus if we use the standard filtration
on both $\VAM$ and $\VA$, then $g$ preserves the filtration.
Consider the graded algebra isomorphism $\gr g: \gr \VAM\to \gr \VA$,
recalling that $\gr \VA \cong k_{-1}[x_1,\dots,x_n]$. By 
\cite[Lemma 4.3]{CPWZ1}, there are units $r_i\in k$ and a permutation
$\sigma\in S_n$ such that $\gr g (x'_i)=r_i x_{\sigma(i)}$. Going 
back to $g$, there must be $a_i\in k$ such that
$g(x'_i)=r_i x_{\sigma(i)}+a_i$ for all $i$. Using the second 
half of the proof of \cite[Lemma 4.3]{CPWZ1}, one sees that $a_i=0$
for all $i$. The assertion follows.
\end{proof}

\section{Examples and Questions}
\label{yysec4}
In this final section, we give some examples and list some questions 
concerning the algebra $\VA$. For the sake of convenience, we assume 
that $k$ is an algebraically closed field of characteristic zero.

\begin{lemma}
\label{yylem4.1} 
If the order of $\Aut(\VA)$ is $2 n!$, then $n$ is an even integer 
$\geq 4$ and $\VA$ is isomorphic to $W_n$. As a consequence, 
$\Aut(\VA) \cong S_n\times \{\pm 1\}$. 
\end{lemma}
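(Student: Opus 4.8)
The plan is to reduce everything to the affine case and then count. First I would dispose of odd $n$: if $n\geq 3$ is odd, then since $\ch k=0$ we have $\ch k\nmid (n-1)!$, so Theorem~\ref{yythm2} shows $\Aut(\VA)$ contains a free group of rank two and is in particular infinite, so its order cannot be $2\,n!$. Hence $n$ must be even, and by Lemma~\ref{yylem1.1}(3) I may replace $\Aut(\VA)$ by $G(\mathcal A)=\Aut_{\af}(\VA)$ throughout. The problem thus becomes: understand when the finite group $G(\mathcal A)$ has order exactly $2\,n!$.

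Next I would exploit the exact sequence $1\to Z\to G(\mathcal A)\xrightarrow{\pi} S_n$ of Lemma~\ref{yylem1.1}(2), so that $|G(\mathcal A)|=|Z|\cdot|\im\pi|$. Let $\Gamma$ be the graph on $\{1,\dots,n\}$ with an edge $\{i,j\}$ precisely when $a_{ij}\neq 0$. Two observations drive the argument. First, since $Z=\{(r_i)\in(k^\times)^n : r_ir_j=1 \text{ whenever } \{i,j\}\in\Gamma\}$, a component-by-component analysis of $\Gamma$ shows that $Z$ is finite iff every connected component of $\Gamma$ is non-bipartite, in which case $|Z|=2^{c}$ with $c$ the number of components; each such component has at least three vertices, forcing $n\geq 3$ and hence, being even, $n\geq 4$. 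Second, because any $g(\sigma,\{r_i\})$ satisfies $a_{ij}=r_ir_j a_{\sigma(i)\sigma(j)}$ with $r_i\neq 0$, the permutation $\sigma$ preserves the zero-pattern, i.e.\ $\im\pi\subseteq\Aut(\Gamma)$.

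These combine into the single inequality $|G(\mathcal A)|\leq 2^{c}\,|\Aut(\Gamma)|\leq 2\,n!$. The right-hand estimate is where the real work lies: using $|\Aut(\Gamma)|\leq \big(\prod_i m_i!\big)c!$ for the component sizes $m_i\geq 3$, it reduces to the elementary multinomial bound $2^{c-1}c!<\binom{n}{m_1,\dots,m_c}$, valid whenever $c\geq 2$ and all $m_i\geq 3$; together with the case $c=1$ (where $\Aut(\Gamma)=S_n$ forces $\Gamma$ complete), this shows that equality $|G(\mathcal A)|=2\,n!$ holds only when $\Gamma=K_n$ and $\im\pi=\Aut(\Gamma)=S_n$. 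Thus the hypothesis forces $a_{ij}\neq 0$ for all $i<j$, $Z=\{\pm1\}$ by Lemma~\ref{yylem1.1}(4), and $\pi$ surjective.

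Finally I would upgrade surjectivity of $\pi$ to the isomorphism $\VA\cong W_n$ via Theorem~\ref{yythm3}, for which it suffices to produce nonzero scalars with $a_{ij}=\lambda_i\lambda_j$. Here I would use only that every transposition $(i\,j)$ lies in $\im\pi$: realizing $(i\,j)$ by some $g(\sigma,\{r_s\})$ and inspecting the defining equations forces $r_ir_j=1$ and $r_s=\pm1$ constant for $s\neq i,j$, whence $a_{is}/a_{js}$ is independent of $s$. Fixing a reference index and feeding these ratios into one another then produces $\lambda_i$ with $a_{ij}=\lambda_i\lambda_j$ for all $i,j$, the square roots needed along the way existing because $k$ is algebraically closed. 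Theorem~\ref{yythm3} yields $\VA\cong W_n$, and the consequence $\Aut(\VA)\cong\Aut(W_n)=S_n\times\{\pm1\}$ follows from \cite[Theorem 1]{CPWZ1}. The main obstacle is the middle step: one must simultaneously extract the group-theoretic constraint $\im\pi\subseteq\Aut(\Gamma)$, pin down $|Z|=2^{c}$, and verify the multinomial inequality sharply enough that equality singles out the complete graph.
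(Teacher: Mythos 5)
Your proposal is correct, and while it shares the paper's skeleton (odd $n$ is excluded via Theorem~\ref{yythm2}; Lemma~\ref{yylem1.1}(3) reduces to $G({\mathcal A})$; surjectivity onto $S_n$ plus realized transpositions force $\VA\cong W_n$; the consequence is \cite[Theorem 1]{CPWZ1}), your middle section is genuinely different. The paper disposes of the all-zero case and of $n=2$ by ad hoc arguments ($\Aut \cong S_n\ltimes (k^\times)^n$, resp.\ Alev--Dumas), then asserts $|\Aut(\VA)|=2m$ with $m$ the order of the image in $S_n$ and concludes surjectivity; after rescaling to normalize $a_{12}=a_{13}=a_{23}=1$ and $a_{1i}=1$, it shows all $a_{ij}=1$ directly by realizing the specific transpositions $(12)$ and $(14)$. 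Your graph-theoretic route handles the zero-pattern cases uniformly via bipartiteness, and --- notably --- your computation $|Z|=2^{c}$ together with the bound $2^{c}\bigl(\prod_i m_i!\bigr)c! < 2\,n!$ for $c\geq 2$ actually substantiates a step the paper leaves implicit: its claim $|\Aut(\VA)|=2m$ presupposes $Z=\{\pm 1\}$, which is Lemma~\ref{yylem1.1}(4) and requires all $a_{ij}\neq 0$, the very statement being proved; your counting excludes the competing scenarios $|Z|=2^{c}$, $|\im \pi|=n!/2^{c-1}$. Your endgame (ratios $a_{is}/a_{js}$ constant in $s$, obtained from $r_ir_j=1$ and $r_s^2=1$, yielding $a_{ij}=\lambda_i\lambda_j$ after one square root, then the easy direction of Theorem~\ref{yythm3}) checks out, e.g.\ via $\lambda_1^2=c_{12}a_{12}$, $\lambda_i=a_{1i}/\lambda_1$ and the identity $a_{12}a_{ij}=a_{1j}a_{2i}$; the price of your route is that the $|Z|=2^{c}$ claim, the multinomial inequality, and this construction must all be written out, but each is elementary and correct.
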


\begin{proof} By Theorem \ref{yythm2}, $n$ is even. 
If $a_{ij}=0$ for all $i<j$, then $\Aut(\VA) \cong S_n \ltimes
(k^{\times})^n$ which does not have order $2 n!$. So
$a_{ij}\neq 0$ for some $i<j$.

If $n=2$, then $a_{12}$ must be nonzero, in which case
$V_{2}(\mathcal{A})$ is isomorphic to $W_{2}$.  By \cite{AD},
$\Aut(W_2)=S_2\ltimes k^{\times}$, so $n\neq 2$. Hence $n$ is an even
integer larger than or equal to $4$.

Given that $a_{ij} \neq 0$ for some $i<j$, we claim that $a_{st}\neq
0$ for all $s < t$.  By Lemma \ref{yylem1.1}, $|\Aut(\VA)| = 2m$,
where $m$ is the order of the image of $\Aut (\VA)$ in $S_{n}$. Since
$|\Aut(\VA)|$ is $2 n!$, the map $\Aut (\VA) \to S_{n}$ must be
surjective, so there is a short exact sequence
\[
1\to \{\pm 1\}\to \Aut(\VA)\to S_n\to 1.
\]
This means that for each $\sigma\in S_n$, there is a corresponding
element $g(\sigma,\{r_i\})\in \Aut(\VA)$. In particular, given $s$ and
$t$ with $1\leq s<t \leq n$,
there is a permutation $\sigma \in S_{n}$ such that $(\sigma(i),
\sigma(j)) = (s,t)$.  By \eqref{E1.0.2}, $a_{ij}=r_ir_j
a_{\sigma(i)\sigma(j)} = r_ir_j a_{st}$.  Since $a_{ij}\neq 0$, we
obtain that $a_{st}\neq 0$.

Since elements in $k$ have square roots in $k$, after replacing
$x_i$ by $b_i x_i$ for $i=1,2,3$ for appropriate $b_1,b_2,b_3$,
we may assume that $a_{12}=a_{13}=a_{23}=1$. For each $i>3$, by
replacing $x_i$ by $a_{1i}^{-1}x_i$, we may assume that $a_{1i}=1$
for all $i>3$. Summarizing, we have $a_{1i}=1$ for all $i>1$ and
$a_{23}=1$.

Next we claim that $a_{ij}=1$ for all $i<j$. First we show that
$a_{2i}=1$ for all $i\neq 2$. Taking an element $g((12),\{r_i\})$ in
$\Aut(\VA)$, \eqref{E1.0.2} implies that
\[
\begin{aligned}
a_{12}&=r_1r_2 a_{21},\\
a_{13}&=r_1r_3 a_{23},\\
a_{23}&=r_2r_3 a_{13}.
\end{aligned}
\]
Since $a_{12}=a_{13}=a_{23}=1$, $r_1=r_2=r_3=\pm 1$. By \eqref{E1.0.2},
\[
a_{3i}=r_3r_i a_{3i} \quad  \, \forall \; i>3.
\]
Hence $r_i=r_3=\pm 1$ for all $i>3$. So $r_i=r_1=\pm 1$ for all $i$. 
By \eqref{E1.0.2} again, $a_{2i}=r_2r_i a_{1i}=1$ for all $i$. By symmetry,
$a_{3i}=1$ for all $i\neq 3$. Secondly, we show that $a_{ij}=1$ for all 
$3<i<j$. Nothing needs to be proved if $n=4$. For $n\geq 6$, it suffices
to show $a_{45}=1$ by symmetry. Taking $g((14),\{r_i\})$ in 
$\Aut(\VA)$, \eqref{E1.0.2} implies that
\[
\begin{aligned}
1=a_{12}&=r_1r_2 a_{42}=r_1r_2,\\
1=a_{13}&=r_1r_3 a_{43}=r_1r_3,\\
1=a_{23}&=r_2r_3 a_{23}=r_2r_3,\\
1=a_{34}&=r_3r_4 a_{31}=r_3r_4.
\end{aligned}
\]
Then $r_1=r_2=r_3=r_4=\pm 1$. By \eqref{E1.0.2},
\[
a_{3i}=r_3r_i a_{3i},\quad  \, \forall \; i>4.
\]
Hence $r_i=r_3=\pm 1$ for all $i>4$. So $r_i=r_1=\pm 1$ for all $i$. 
By \eqref{E1.0.2} again, $a_{45}=r_4r_5 a_{15}=1$, as desired.

The consequence follows from \cite[Theorem 1]{CPWZ1}.
\end{proof}

There are cases when $\Aut(\VA)$ is smaller than
$S_n \times \{\pm 1\}$, as the next example shows. 

\begin{example}
\label{yyex4.2} Let $n=4$.

(1) Let $q$ be transcendental over $\Q\subseteq k$. Let $a_{12}=q$,
$a_{13}=q^{2}$, $a_{14}=q^{4}$, $a_{23}=q^{8}$, $a_{24}=q^{16}$, and
$a_{34}=q^{32}$. For any $\{i_1,\dots,i_4\}=\{j_1,\dots,j_4\}=\{1,\dots,4\}$,
unless $i_s=j_s$ for all $s$, the element
$a_{i_1i_2}a_{i_3i_4}a^{-1}_{j_1j_2}a^{-1}_{j_3j_4}$ is a non-trivial
power of $q$, which is not a root of unity.  Using 
\eqref{E1.0.1}, \eqref{E1.0.2} and the fact that the homological
determinant of $g$ is $r_1r_2r_3r_4$, one can show that
\[
G({\mathcal A})/\{\pm 1\} \cong \{Id, (12)(34),(13)(24),(14)(23)\}
\cong (\Z/(2))^{\oplus 2}.
\]
In general, one can show that when $n=4$ and $a_{ij}\neq 0$
for all $i<j$, then $G({\mathcal A})/\{\pm 1\}$
always contains the subgroup $\{Id, (12)(34),(13)(24),(14)(23)\}$.

(2) Let $q$ be transcendental over $\Q\subseteq k$. Let $a_{12}=q$,
$a_{13}=q^{2}$, $a_{14}=q^{4}$, $a_{23}=q^{8}$, and $a_{24}=a_{34}=0$.
We claim that $\Aut(\VA)\cong S_2\times \{\pm 1\}$.
If $g=g(\sigma, \{r_i\})$ is in $\Aut(\VA)$, then $\sigma$ fixes
$1$ and $4$, as $1$ is the only index so that $a_{1i}\neq 0$ for 
all $i$ and $4$ is the only index so that $a_{i4}=0$ for two different
$i$. If $g$ is neither 1 nor $-1$, $g$ must be of the form
$g((23),\{r_i\})$. In fact, one can check that
$G({\mathcal A})/\{\pm 1\} \cong \{(23)\}\cong S_2$.

(3) 
Let $(a_{ij})_{4\times 4}=
\begin{pmatrix} * & 1 & -1& 1\\
1& * & 1& -a^2 \\-1& 1& * & -a\\
1& -a^2& -a& *\end{pmatrix}$ where $a^3=-1$ and $a\neq -1$. 
This is the same as Example \ref{yyex1.2}. One can show that 
$G({\mathcal A})/\{\pm 1\}\cong A_4.$
\end{example}

\begin{example}
\label{yyex4.3} Let $n=6$.

(1) Suppose that $a_{15}=a_{45}=a_{i6}=0$ for all $i=2,3,4,5$, that
all other $a_{ij}$ are nonzero, and that $a_{12}a_{34}\neq a_{13}a_{24}$.
Then $\Aut(\VA) \cong \{\pm 1\}$. To see this, 
we first note that $6$ is the only index so that there are $4$
different $i$ such that $a_{i6}=0$; thus if $g(\sigma, \{r_i\})$
is in $\Aut(\VA)$, $\sigma$ fixes $6$. Similarly, $\sigma$ fixes
$1, 4$ and $5$. The only possible nontrivial $\sigma$ is $(23)$. In
the case of $g((23), \{r_i\})$, we have
\[
\begin{aligned}
a_{12}&=r_1r_2 a_{13},\\
a_{13}&=r_1r_3 a_{12},\\
a_{23}&=r_2r_3 a_{23},\\
a_{14}&=r_1r_4 a_{14},\\
a_{24}&=r_2r_4 a_{34}.
\end{aligned}
\]
By the first three equations, we have $r_1^2=1$ and $r_2r_3=1$. 
So $r_1=\pm 1$. By the fourth equation, $r_4=r_1$. Then the first and
fifth equations contradict the hypothesis that
$a_{12}a_{34}\neq a_{13}a_{24}$. Therefore the only
$g\in \Aut(\VA)$ is of the form $\pm 1$, so
$\Aut(\VA) \cong \{\pm 1\}$.

(2) Suppose that $a_{12}=0$ and that $a_{ij}=1$ for all $i<j$ except
for $a_{12}$. Then $\Aut(\VA)/\{\pm 1\} \cong S_2\times S_4$ and therefore
$\Aut(\VA) \cong (S_2\times S_4)\times \{\pm 1\}$.
\end{example}

\begin{question}
\label{yyque4.4}
Which finite groups can be realized as $\Aut(\VA)$? For which of those
groups $G$ can one classify the algebras $\VA$ such that $\Aut (\VA)
\cong G$?
\end{question}

Similar to Theorem \ref{yythm1}, we have the following. Recall that
$G_{1}(\mathcal{A})$ is defined just after Lemma \ref{yylem1.1}.

\begin{lemma}
\label{yylem4.5} Suppose there is a short exact sequence
\[
1\to\{\pm 1\}\to \Aut(\VA)\to H\to 1
\]
where $H$ is a subgroup of $S_n$ such that $H=[H,H]$ {\rm{(}}for 
example, $H=A_n$ when $n\geq 6${\rm{)}}. Then
\begin{enumerate}
\item[(1)]
$G({\mathcal A})=G_1({\mathcal A})$.
\item[(2)]
For each subgroup $G\subseteq \Aut(\VA)$, the fixed subring
$\VA^G$ is filtered AS Gorenstein.
\end{enumerate}
\end{lemma}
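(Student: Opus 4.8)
The plan is to reduce both parts to results already established for \emph{affine} automorphisms, the point being that the hypotheses secretly force $n$ to be even. First I would argue that $n$ is even. Since $H\subseteq S_n$ and $\{\pm 1\}$ are both finite, the short exact sequence makes $\Aut(\VA)$ a finite group of order $2|H|$. We are in characteristic $0$ (the standing assumption of Section~\ref{yysec4}), so $\ch k$ does not divide $(n-1)!$; were $n$ an odd integer $\geq 3$, Theorem~\ref{yythm2} would produce a free subgroup of rank two inside $\Aut(\VA)$, contradicting finiteness. Hence $n$ is even, and Lemma~\ref{yylem1.1}(3) then gives $\Aut(\VA)=G({\mathcal A})$, so every automorphism of $\VA$ has the form $g(\sigma,\{r_1,\dots,r_n\})$.

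For part (1) I would use the homological determinant as the character that cuts out $G_1$. Recall from the proof of Theorem~\ref{yythm1.3} that $\hdet_{\VA} g(\sigma,\{r_1,\dots,r_n\})=\prod_{i=1}^n r_i$ (the permutation part has $\hdet=1$ and the diagonal part has $\hdet=\prod_i r_i$); since all automorphisms are affine, $\hdet$ is a group homomorphism $\Aut(\VA)=G({\mathcal A})\to k^\times$ whose kernel is, by definition, exactly $G_1({\mathcal A})$. The crux is that this homomorphism is trivial. Because $n$ is even, $\hdet(-1)=(-1)^n=1$, so the nontrivial element $-1=g(Id,\{-1,\dots,-1\})$ of the kernel $\{\pm 1\}$ of $\Aut(\VA)\to H$ lies in $\ker(\hdet)$. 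As $k^\times$ is abelian, $\hdet$ therefore descends to a homomorphism $H=\Aut(\VA)/\{\pm 1\}\to k^\times$, which must be trivial since $H=[H,H]$ is perfect. Hence $\hdet\equiv 1$ on $G({\mathcal A})$, and $G({\mathcal A})=\ker(\hdet)=G_1({\mathcal A})$.

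For part (2) I would simply invoke Theorem~\ref{yythm1.3}. Let $G$ be any subgroup of $\Aut(\VA)$. By the reductions above together with part (1), $G\subseteq \Aut(\VA)=G({\mathcal A})=G_1({\mathcal A})$, so $G$ is a subgroup of $G_1({\mathcal A})$; it is finite (being contained in the finite group $\Aut(\VA)$), and $\ch k=0$ does not divide $|G|$. Theorem~\ref{yythm1.3} then applies verbatim and shows that $\VA^G$ is filtered Artin--Schelter Gorenstein (in fact also Auslander--Gorenstein and $\GKdim$-Macaulay).

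The main obstacle is the triviality of $\hdet$ in part (1), and it rests on two facts meshing: that $\hdet(-1)=1$, which is exactly where evenness of $n$ enters (and which Theorem~\ref{yythm2} forces), and that $H$ is perfect, which kills the induced character into the abelian group $k^\times$. I would emphasize that this evenness is not an extra hypothesis but a consequence: if $n$ were odd, then $-1$ would satisfy $\prod_i r_i=(-1)^n=-1$ and lie in $G({\mathcal A})\setminus G_1({\mathcal A})$, so the stated conclusion $G({\mathcal A})=G_1({\mathcal A})$ can only hold when $n$ is even, precisely as the finiteness built into the short exact sequence guarantees.
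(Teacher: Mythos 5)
Your proof is correct and follows essentially the same route as the paper's: the homological determinant kills $\pm 1$ (since $n$ is even), hence descends to a character of the perfect group $H$, which must be trivial, giving $G({\mathcal A})=G_1({\mathcal A})$; part (2) then follows from part (1) and Theorem \ref{yythm1.3}. You simply fill in details the paper leaves implicit, notably justifying the evenness of $n$ via Theorem \ref{yythm2} and the finiteness forced by the short exact sequence, which is exactly the intended reasoning.
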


\begin{proof} Part (2) is a consequence of part (1) and Theorem 
\ref{yythm1.3}, so we only prove part (1). Since $n$ must be even,
the homological determinant $\hdet: \Aut(\VA)\to k^\times$
maps $\pm 1$ to 1. Hence it induces a group homomorphism $\hdet': H\to
k^\times$. Since $H=[H,H]$
and $k^\times$ is abelian, $\hdet'$ is the trivial 
map. So the image of $\hdet$ is $\{1\}$. This is equivalent
to saying that $G({\mathcal A})=G_1({\mathcal A})$.
\end{proof}

Related to the above and Theorem \ref{yythm1}, we have the 
following question.

\begin{question}
\label{yyque4.6}
Classify all $\VA$ (when $n$ is even) such that $\VA^G$ is 
Gorenstein for all subgroups $G\subseteq \Aut(\VA)$.
\end{question}

Theorem \ref{yythm3} suggests the following question.

\begin{question}
\label{yyque4.7}
When $n$ is odd, determine when two $\VA$s are isomorphic.
\end{question}

A related question is the following.

\begin{question}
\label{yyque4.8}
If $\VA$ is Morita equivalent to $\VAM$, is
$\VA$ isomorphic to $\VAM$?
\end{question}

Some Hopf algebra actions on $W_2$ are given in \cite[Example 3.4]{CWWZ}.
It would be very interesting to work out all possible Hopf algebra actions.

\begin{question}
\label{yyque4.9}
Suppose $n$ is even. Classify all finite dimensional Hopf algebras that
can act on $W_n$ inner-faithfully.
\end{question}

%

\subsection*{Acknowledgments} S. Ceken was supported by the Scientific
and Technological Research Council of Turkey (TUBITAK), Science
Fellowships and Grant Programmes Department (Programme no.~2214).
Y.H. Wang was supported by the Natural Science Foundation of China
(grant nos.~10901098 and 11271239), the Scientific Research
Starting Foundation for the Returned Overseas Chinese Scholars,
Ministry of Education of China and the Innovation program of Shanghai 
Municipal Education Commission. J. J. Zhang was supported by the US
National Science Foundation (Nos. DMS-0855743 and DMS-1402863).

\providecommand{\bysame}{\leavevmode\hbox to3em{\hrulefill}\thinspace}
\providecommand{\MR}{\relax\ifhmode\unskip\space\fi MR }
\providecommand{\MRhref}[2]{%

\href{http://www.ams.org/mathscinet-getitem?mr=#1}{#2} }
\providecommand{\href}[2]{#2}

\end{document}